\documentclass[12pt,reqno]{amsart}
\usepackage{amsmath} 
\usepackage{amssymb}
\usepackage{dsfont}
\usepackage[dvips,draft,final]{graphics}
\usepackage[T1]{fontenc}
\usepackage{fancyhdr}
\usepackage{url}
\usepackage[colorlinks,linktocpage,linkcolor=blue]{hyperref}

\usepackage{color}
\usepackage{graphicx}
\newtheorem{thm}{Theorem}[section]

\newtheorem{lem}{Lemma}[section]
\newtheorem{prop}{Proposition}[section]

\renewenvironment{abstract}{%
        \small
        \quotation
         \noindent {\bfseries \abstractname } }%
      {\if@twocolumn\else\endquotation\fi}
\numberwithin{equation}{section}

\renewcommand{\leq}{\leqslant}
\renewcommand{\geq}{\geqslant}
\providecommand{\abs}[1]{\left\lvert#1\right\rvert}
\providecommand{\norm}[1]{\left\lVert#1\right\rVert}
\newcommand{\bel}{\begin{equation} \label}
\newcommand{\ee}{\end{equation}}
\def\R{\mathbb R}

\def\pa{\partial}

\renewcommand{\leq}{\leqslant}
\renewcommand{\geq}{\geqslant}
\def\epsilon{\varepsilon}
\def\phi {\varphi}

\title[Simultaneous recovery of various time-dependent coefficients]{Simultaneous recovery of various time-dependent coefficients in a hyperbolic equation  with space-time nonlocal attenuation
}

\author[Yavar Kian and Gunther Uhlmann]{ Yavar Kian$^1$ and Gunther Uhlmann$^2$
}
\date{}

\begin{document}

\maketitle

\begin{abstract}  This article is devoted to the inverse problem of uniquely determining different classes of coefficients in a hyperbolic equation with space-time nonlocal attenuation from a local source-to-solution map. Our objective is to exploit the nonlocal attenuation mechanism to address inverse coefficient problems that remain open, or are even fundamentally intractable, for linear and nonlinear hyperbolic equations without attenuation. More precisely, by leveraging the interactions induced by nonlocal attenuation, we simultaneously determine general classes of leading and lower order coefficients depending on both space and time. In addition, we formulate our results using data collected on disjoint sets, thereby extending the existing theory and introducing the notion of data on sets disjoint in both space and time for hyperbolic equations. Our approach builds on key features of nonlocal operators, such as memory effects and the unique continuation principle, while integrating the interplay between local and nonlocal operators along with structural properties of hyperbolic equations.\\
{\bf Mathematics subject classification 2020 :} 35R30, 35L20, 26A33.
\vskip 4.5mm

\end{abstract}

\renewcommand{\thefootnote}{\fnsymbol{footnote}}
\footnotetext{\hspace*{-5mm} 
\begin{tabular}{@{}r@{}p{16cm}@{}}
$^*$
&The work of Y. Kian is supported by the French National Research Agency ANR and Hong Kong RGC Joint Research Scheme for the project IdiAnoDiff (grant ANR-24-CE40-7039). The work of G. Uhlmann is partially supported by NSF.\\
$^1$ 
& Univ Rouen Normandie, CNRS, Normandie Univ, LMRS UMR 6085, F-76000 Rouen, France  (\texttt{yavar.kian@univ-rouen.fr})\\
$^2$
& Department of Mathematics, University of Washington , Seattle, WA 98195-4350, USA. (\texttt{gunther@math.washington.edu})
\end{tabular}}

\section{Introduction}
\label{sec-intro}

Let $n\geq2$, $T>0$,  $\alpha\in(0,1)$, $s\in(0,1/2]$,  $c\in C^2([0,T]\times\R^n;(0,+\infty))$, $a=(a_{ij})_{1\leq i,j\leq n}\in C^1([0,T]\times\R^n;\R^{n\times n})$, $B=(b_k)_{0\leq k\leq n}\in C^1([0,T]\times\R^n;\R^{n+1})$, $d,e\in C^1([0,T]\times\R^n)$. We assume that there exists $R_1>R_0>0$   such that
\bel{coe1} d(t,x)=e(t,x)=0,\quad (t,x)\in[0,T]\times\R^{n},\ |x|>R_1,\ee
\bel{coe2} c(t,x)=1,\quad a_{ij}(t,x)=\delta_{ij},\quad B(t,x)=0,\quad (t,x)\in[0,T]\times\R^{n},\ |x|>R_0,\ i,j=1,\ldots,n,\ee
where $\delta_{ij}$, $i,j=1,\ldots,n$, denotes the Kronecker symbol. Moreover, we assume that there exists a constant $c_\star>0$ such that
\bel{coe3} a_{ij}(t,x)=a_{ji}(t,x),\quad (t,x)\in[0,T]\times\R^{n},\ i,j=1,\ldots,n,\ee
\bel{coe4} \sum_{i,j=1}^na_{ij}(t,x)\xi_i\xi_j\geq c_\star|\xi|^2,\quad (t,x)\in[0,T]\times\R^{n},\ \xi=(\xi_1,\ldots,\xi_n)\in\R^n.\ee
We fix also $P_{a,B,c}$ the differential operator defined by 
\bel{P}P_{a,B,c}w=c(t,x)^{-2}\partial_t^2 w-\sum_{i,j=1}^n\partial_{x_i}(a_{ij}(t,x)\partial_{x_j}w)+B(t,x)\cdot(\partial_t w,\nabla_xw),\quad w\in H^1((0,T)\times\R^n)\ee
and, for any $F\in L^2((0,T)\times\R^n)$, we consider the following initial value problem (IVP in short)
\begin{equation}
\label{eq1}
\left\{
\begin{aligned}
& P_{a,B,c}u +d(t,x) \partial_t^{\alpha} u +e(t,x)(-\Delta)^{s}u = F, &&\quad \textrm{in } (0,T)\times \R^n,\\                                                    
& u(0,x) = 0,\quad \partial_t u(0,x) = 0,                                             &&\quad x\in\R^n.
\end{aligned}
\right.
\end{equation}
Here $\partial_t^\alpha$ denotes
the Caputo fractional derivative of order $\alpha $ with respect to $t$, defined by 
$$
\partial_t^{\alpha} u(t) := 
\frac{1}{\Gamma(1-\alpha)} \int_0^t (t-s)^{-\alpha}\partial_s u(s) ds,\ u\in W^{1,1}(0,T),\ t\in(0,T),
$$
where $\Gamma$ denotes the usual Gamma function. In addition,  for all $r>0$, we denote by $(-\Delta)^r$  the fractional Laplacian of order $r$ defined by
$$(-\Delta)^rh=\mathcal F^{-1}_\xi(|\xi|^{2r}\mathcal F h(\xi)),\quad h\in L^2(\R^n),$$
where $\mathcal F$ denotes the Fourier transform in $\R^n$.

Fix $\epsilon\in(0,T)$ and $\mathcal O_j$, $j=1,2$, two open not empty subsets of $\R^n$ such that $\overline{\mathcal O_1}\cap\overline{\mathcal O_2}=\emptyset$ and $\overline{\mathcal O_j}\subset B_{R_1}\setminus\overline{B_{R_0}}$, with $B_r:=\{x\in\R^n:\ |x|<r\}$, $r>0$. Applying Proposition \ref{p1}, we define the partial source-to-solution map $\Lambda_{a,B,c,d,\epsilon}$ as follows
\bel{ss} \Lambda_{a,B,c,d,\epsilon}: C^\infty_0((0,\epsilon)\times\mathcal O_1)\ni F\mapsto u|_{(T-\epsilon,T)\times\mathcal O_2},\ee
with $u\in C([0,T];H^1(\R^n))\cap C^1([0,T];L^2(\R^n))$ the unique solution of \eqref{eq1}. In the present article, we study the following inverse problem:

\begin{itemize}
\item[{\bf(IP)}] {\em
For $\epsilon\in(0,T)$ arbitrarily small, determine the set of coefficients $(a,B,c,d)$ from the knowledge of $\Lambda_{a,B,c,d,\epsilon}$.
}
\end{itemize}

Recall that hyperbolic equations of the form \eqref{eq1} incorporating nonlocal attenuation terms are commonly used to model various physical phenomena. These include imaging modalities such as tomography in biological tissues, which are often modeled by acoustic equations with frequency-dependent attenuation terms \cite{HP,Sz}. Such attenuation can be expressed in terms of both time-fractional derivatives and the fractional Laplacian; see, for instance, \cite{BC,CH}. Similarly, equations of the form \eqref{eq1} arise in the modeling of wave propagation in viscoelastic media \cite{AEE,SZS}, including more specific models such as the space-time fractional Zener wave equation, which involves both time-fractional derivatives and the fractional Laplacian \cite{AJOPZ}. In these models, the time-fractional attenuation $d\partial_t^{\alpha}$ accounts for memory and hereditary effects in the material, whereas the attenuation term $e(-\Delta)^{s}$, involving fractional Laplacians, accounts for long-range spatial interactions. In this context, the goal of the inverse problem \textbf{(IP)} is to determine the physical properties of the medium (e.g., density, attenuation, wave speed) encoded by the general time-dependent coefficients $(a,B,c,d)$ in \eqref{eq1}.

Inverse coefficient problems for hyperbolic equations, such as \textbf{(IP)}, represent a central theme in inverse problem theory and have been extensively investigated over the past several decades. For time-independent coefficients, one of the most effective approaches to solving \textbf{(IP)} is the \emph{boundary control method}, introduced in \cite{B,BK} (see also \cite{KKL}). However, limitations of unique continuation in the presence of time-dependent coefficients \cite{AB} have so far restricted the application of the boundary control method to coefficients that depend analytically on time \cite{Es}. To overcome these limitations, several authors have developed approaches based on the construction of special solutions, commonly referred to as geometric optics solutions. Without aiming to provide an exhaustive account of the literature, we mention \cite{FIKO,FeK,Ki1,KiOk,SY}, as well as the more recent works \cite{AFO1,AFO2}, which address related inverse problems in the setting of Lorentzian manifolds. For hyperbolic equations involving nonlocal terms, we mention results concerning source recovery \cite{AP,AP2,HKSY}, as well as the identification of time-independent coefficients \cite{BuID,BDU,Dy,Y,Z}. More recently, \cite{KRU} investigated the recovery of zero- and first-order time-dependent coefficients in hyperbolic equations with time-fractional attenuation on a Riemannian manifold. Their approach exploits the memory effects induced by the fractional attenuation term, allowing the boundary measurements to be restricted to an arbitrarily small time interval of the form $(T-\epsilon,T)$, with $\epsilon>0$ arbitrarily small.

The aforementioned results have primarily focused on the recovery of lower-order time-dependent coefficients, such as $B$, while leaving aside the recovery of the leading-order coefficients $a$ and $c$. The recovery of such coefficients from the source-to-solution map of a hyperbolic equation remains an open problem in the linear setting. In view of these limitations, the work \cite{KLU} addressed the recovery of time-dependent leading-order coefficients for a semilinear hyperbolic equation. The general strategy developed in \cite{KLU} relies on nonlinear interactions and on the properties of products of waves, thereby allowing the authors to solve, in the nonlinear setting, an inverse problem that remains open for the corresponding linear equation. The approach initiated in \cite{KLU} has subsequently been successfully developed in several directions \cite{CLOP,FLO,HUZ,KLOU}. In the spirit of the methodology introduced in \cite{KLU}, the goal of the present article is to develop a new approach to inverse problems for hyperbolic equations with nonlocal attenuation, in which the nonlocal character of the attenuation is substantially leveraged to address the inverse problem \textbf{(IP)}.

In addition, owing to the generality of the class of coefficients under consideration, problem \textbf{(IP)} also falls within the category of inverse problems involving data prescribed on disjoint sets. More precisely, problem \textbf{(IP)} introduces the notion of coefficient recovery from measurements supported on sets that are disjoint both in time and in space. Indeed, the formulation of \textbf{(IP)} involves excitations supported in $(0,\epsilon)\times\mathcal O_1$ and measurements performed on $(T-\epsilon,T)\times\mathcal O_2$, where, for $\epsilon\in(0,T/4)$, we have
$$
[0,\epsilon]\cap[T-\epsilon,T]=\emptyset,
\qquad
\overline{\mathcal O_1}\cap\overline{\mathcal O_2}=\emptyset.
$$
To the best of our knowledge, this setting extends the existing literature on inverse problems with data supported on disjoint sets, encompassing both linear and nonlinear PDEs, even in the case of time-independent coefficients \cite{FLO,KKLO,KRU,LO}.

This article is organized as follows. In Section~\ref{s2}, we present our main result, formulated in Theorem~\ref{t1}, which provides a positive resolution to problem~\textbf{(IP)} under suitable assumptions. In Section~\ref{s3}, we collect several preliminary results, including the well-posedness of problem~\eqref{eq1} and its formal adjoint, along with key properties of the nonlocal operators appearing in \eqref{eq1}. Finally, Section~\ref{s4} is devoted to the proof of Theorem~\ref{t1}.

\section{Main result}\label{s2}
Let us fix $\gamma\in C([0,T];\mathcal O_1)$, $\tau\in C([0,T];(0,+\infty)$ such that 
$$B(\gamma(t),\tau(t)):=\{x\in\R^n:\ |x-\gamma(t)|<\tau(t)\}\subset \mathcal O_2,\quad t\in[0,T].$$
Fix $\delta\in(0,T)$ and, for all $t_1\in(0,T)$, introduce the set
\bel{Q}Q_{\gamma,\tau,t_1}:=\{(t,x)\in(T-t_1,T)\times\R^n:\ x\in B(\gamma(t),\tau(t))\}\subset (T-t_1,T)\times\mathcal O_2.\ee
We fix also $U$, $V_1,V_2$ three open  not-empty subsets of $\R^n$ such that 
\bel{U}([T-\delta,T]\times\overline{U})\cap\overline{Q_{\gamma,\tau,\delta}}=\emptyset,\quad \overline{U}\subset \R^n\setminus\overline{B_{R_0}},\quad\overline{V_1}\cup\overline{V_2}\subset \mathcal O_1,\quad \overline{V_1}\cap\overline{V_2}=\emptyset.\ee

Our main result, is the following answer to the problem \textbf{(IP)}.

\begin{thm}\label{t1} For $\ell=1,2$, let $c^\ell\in C^2([0,T]\times\R^n;(0,+\infty))$, $a^\ell=(a_{ij}^\ell)_{1\leq i,j\leq n}\in C^1([0,T]\times\R^n;\R^{n\times n})$, $B^\ell=(b_k^\ell)_{0\leq k\leq n}\in C^1([0,T]\times\R^n; \R^{n+1})$, $d^\ell,e\in C^1([0,T]\times\R^n)$ satisfy conditions \eqref{coe1}-\eqref{coe4} with $(a,B,c,d)=(a^\ell,B^\ell,c^\ell,d^\ell)$.
Assume that conditions  \eqref{U}   and 
\bel{t1b}\inf_{(t,x)\in \overline{Q_{\gamma,\tau,\delta}}\cup ([0,T]\times \overline{U})\cup ([0,T]\times \overline{B_{R_0}})}|e(t,x)|>0\ee
hold true. Assume also that the following conditions 
\bel{t1c}\min\left( \inf_{x\in \overline{U}} |d^1(T,x)|,\inf_{(t,x)\in [0,T]\times \overline{V_2}} |d^1(t,x)|\right)>0\ee
\bel{t1d} d^1(t,x)=0,\quad (t,x)\in Q_{\gamma,\tau,\delta} \cup([0,T]\times V_1),\ee
\bel{t1dd}d^1(t,x)=d^2(t,x),\quad (t,x)\in[0,T]\times (\R^n\setminus B_{R_0})\ee
are fulfilled. Then, for any $\epsilon\in (0,T)$ arbitrary small, the condition 
\bel{t1e} \Lambda_{a^1,B^1,c^1,d^1,\epsilon}=\Lambda_{a^2,B^2,c^2,d^2,\epsilon}\ee
implies that $a^1=a^2$, $B^1=B^2$, $c^1=c^2$, $d^1=d^2$.
\end{thm}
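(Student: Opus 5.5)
Our plan is to use the nonlocal attenuation to turn the partial source-to-solution map into full knowledge of the solution on larger sets, and then to recover the coefficients from the PDE pointwise. Fix $F\in C^\infty_0((0,\epsilon)\times\mathcal O_1)$, let $u^\ell$ be the solution of \eqref{eq1} with coefficients $(a^\ell,B^\ell,c^\ell,d^\ell)$ given by Proposition \ref{p1}, and set $w=u^1-u^2$.

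\emph{Step 1 (measurement set).} By \eqref{t1e}, $w=0$ on $(T-\epsilon,T)\times\mathcal O_2$. Since $\overline{\mathcal O_2}\subset B_{R_1}\setminus\overline{B_{R_0}}$, the leading and first-order parts coincide there, and $d^1=d^2$ there by \eqref{t1dd}. Subtracting the two equations on $(T-\epsilon,T)\times\mathcal O_2$ (where $F=0$) gives
\[
d^1\partial_t^\alpha w+e(-\Delta)^s w=0 .
\]
On $Q_{\gamma,\tau,\epsilon}$ we have $d^1=0$ by \eqref{t1d} and $e\neq0$ by \eqref{t1b}, so $(-\Delta)^s w(t,\cdot)=0$ on $B(\gamma(t),\tau(t))$. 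Together with $w(t,\cdot)=0$ there, the unique continuation principle for the fractional Laplacian (stated in Section \ref{s3}) yields $w(t,\cdot)\equiv0$ on $\R^n$ for $t\in(T-\epsilon,T)$.

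\emph{Step 2 (backward propagation of $w=0$).} Use the equations for $u^\ell$ outside $B_{R_0}$, together with the memory of $\partial_t^\alpha$, to show that $u^1=u^2$ on $(0,T)\times(\R^n\setminus \overline{B_{R_0}})$. The natural route is a Laplace/Fourier-in-time or analyticity argument combined with a duality (Green) identity against solutions of the adjoint problem. Condition \eqref{t1c} on $U$ at $t=T$ is used here: there $d^1(T,x)\neq0$, so the singular memory kernel can be isolated, and the vanishing of $w$ near $t=T$ propagates to vanishing of $\partial_t^\alpha$-type integrals of $\partial_t u^\ell$ over $(0,T)$. This is the step I expect to be the main obstacle, since the coefficients are time-dependent and standard unique continuation for waves fails. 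I would first try to differentiate the identity in $x$-localized form, using that the kernel $(t-s)^{-\alpha}$ is nonvanishing, to obtain a Volterra equation for $w$ on $U$ whose only solution is zero. Then, on $V_2$, I would extend to all times using $d^1\neq0$ on $[0,T]\times\overline{V_2}$.

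\emph{Step 3 (recovery of coefficients).} Once $u^1=u^2=:u$ outside $B_{R_0}$ for all $t$, including on $V_1$ where $d^1=0$, the fractional Laplacian couples interior and exterior values. Apply $(-\Delta)^s$ on the exterior region, use \eqref{t1b} on $[0,T]\times\overline{U}$, and invoke fractional unique continuation again to get $u^1=u^2$ on all of $(0,T)\times\R^n$. Subtracting the equations then gives, for every source $F$,
\[
(P_{a^1,B^1,c^1}-P_{a^2,B^2,c^2})u+(d^1-d^2)\partial_t^\alpha u=0 \quad\text{in } (0,T)\times B_{R_0}.
\]
By a density or Runge-type argument, the solutions $u$ generated from sources in $V_1,V_2$ are rich enough that $u$ and its derivatives up to second order can be prescribed locally to match arbitrary polynomials at any point. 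Evaluating the identity on such $u$ separates the coefficients order by order: first $c^{-2}$ and $a$ (second order), then $B$ (first order), and finally $d$ through $\partial_t^\alpha u$, which is independent of $u,\partial u$ because of the memory. This gives $a^1=a^2$, $B^1=B^2$, $c^1=c^2$, $d^1=d^2$.
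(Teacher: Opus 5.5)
There is a genuine gap, and it sits at the decisive step. Your Step~1 follows the paper, and so does the outline of Steps~2--3 up to $u^1=u^2$ on $(0,T)\times\R^n$. The gap is the claim in Step~3 that, ``by a density or Runge-type argument'', the solutions are rich enough to separate the coefficients.

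This is not a routine fact. For the local operator alone it is false: by finite speed of propagation, all solutions generated by sources in $(0,\epsilon)\times\mathcal O_1$ vanish near $(t_0,x_0)$ when $x_0\in B_{R_0}$ and $t_0$ is small. Any such property must therefore come from the nonlocal terms together with the hypotheses on $V_1$ and $V_2$, which you name but never use.

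The missing idea is the paper's duality argument.
\begin{itemize}
\item From $\mathcal L u^2=0$ (with $\mathcal L$ the difference operator \eqref{L}) one gets $\int_0^T\int_{\R^n}u^2\,\overline{\mathcal L^*\psi}\,dx\,dt=0$ for every test function $\psi$ and every $F$.
\item Solve the adjoint problem with source $\mathcal L^*\psi$, which vanishes outside $[0,T]\times\overline{B_{R_0}}$. Integration by parts turns the identity into $\int F\overline{v}=0$, so $v=0$ on $(0,\epsilon)\times\mathcal O_1$.
\item Because $d^1=d^2=0$ on $[0,T]\times V_1$ for \emph{all} times, the future-directed memory term $\partial_t^{\alpha*}(d^2v)$ vanishes there. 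Hence $ev=(-\Delta)^s(ev)=0$ on $(0,\epsilon)\times V_1$, and Proposition~\ref{p3} gives $ev\equiv0$ on $(0,\epsilon)\times\R^n$.
\item The equation then gives $d^2v=\partial_t^{\alpha*}(d^2v)=0$ on $(0,\epsilon)\times V_2$. After time reversal \eqref{l1a}, Proposition~\ref{p4} and $d^2=d^1\neq0$ on $[0,T]\times\overline{V_2}$ yield $v=0$ on $(0,T)\times V_2$.
\item A second use of Proposition~\ref{p3}, together with \eqref{t1b}, gives $v=0$ on $(0,T)\times B_{R_0}$, hence $\mathcal L^*\psi\equiv0$.
\end{itemize}
This is where $V_1$ and $V_2$ enter. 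In your plan $V_2$ is placed in the forward argument instead, which only needs $U$.

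Even granting density, ``prescribing the $2$-jet of $u$ at a point'' is not what duality delivers; it delivers the weak identity $\mathcal L^*\psi\equiv0$, not $C^2$ control of solutions. Nor can it isolate $d^1-d^2$, because $\partial_t^\alpha u(t_0,x_0)$ depends on the whole history of $u$, not on its jet at $(t_0,x_0)$. The paper reads the coefficients off $\mathcal L^*\psi(t_0,x_0)=0$ with $\psi=e^{i\lambda(t-t_0)}\chi$ and $\psi=e^{i\lambda(x-x_0)\cdot\xi}\chi$, letting $\lambda\to\infty$. Separating $b_0$ from the memory term needs the bound $|\partial_t^{\alpha*}(d\psi)(t_0,x_0)|\leq C\lambda^\alpha$ with $\alpha<1$. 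The coefficient $d$ is recovered last, by pairing $\partial_t^{\alpha*}((d^2-d^1)\psi)=0$ with $I_t^\alpha\psi_1$ and using \eqref{l1c}--\eqref{l1d}.

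Your Step~2 is simpler than you expect, and the Volterra idea is the wrong tool. Take $\epsilon<\delta$ without loss of generality, so that \eqref{t1d} applies on $Q_{\gamma,\tau,\epsilon}$. Since $w\equiv0$ on $(T-\epsilon,T)\times\R^n$, outside $\overline{B_{R_0}}$ the equation collapses to $d^1\partial_t^\alpha w=0$. By \eqref{t1c} and continuity, $d^1\neq0$ on $[T-\epsilon_1,T]\times\overline{U_2}$ for some $\epsilon_1\in(0,\epsilon)$ and some open $U_2\subset U$, so $w=\partial_t^\alpha w=0$ there. For $t\in(T-\epsilon_1,T)$ this reads $\int_0^{T-\epsilon}(t-s)^{-\alpha}\partial_sw(s,\cdot)\,ds=0$. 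That is a first-kind equation whose unknown lives on an interval disjoint from the one where it holds, not a Volterra equation. Its uniqueness is exactly Proposition~\ref{p4}, which gives $w=0$ on $(0,T)\times U_2$.
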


To the best of our knowledge, Theorem \ref{t1} provides the first positive resolution of problem \textbf{(IP)} under the generic assumptions \eqref{t1b}-\eqref{t1dd}. In fact, this theorem establishes what appears to be the most comprehensive coefficient recovery result currently available for classes of both linear and nonlinear hyperbolic equations. Specifically, we relax the assumptions previously imposed on the leading-order coefficients $(a,c)$ in the nonlinear settings of \cite{CLOP,FLO,HUZ,KLU,KLOU}, while simultaneously determining the lower-order coefficients $(B,d)$, which had only been treated separately in \cite{FIKO,FeK,Ki1,KiOk,SY}. Furthermore, in contrast to the existing literature, our approach circumvents the classical obstructions inherent to coefficient identification in hyperbolic equations, such as gauge equivalences and domain-of-dependence restrictions stemming from finite propagation speed. This yields a stronger uniqueness result under minimal hypotheses. The scope of Theorem \ref{t1} highlights the decisive role of nonlocal attenuation in inverse problems for hyperbolic equations, in line with phenomena documented in control theory, where dissipative  effects substantially modify fundamental dynamics such as long-time behavior.

Beyond the broad class of coefficients recovered, Theorem \ref{t1} also constitutes the most general uniqueness result based on data supported on disjoint sets. In particular, it provides the first recovery result for hyperbolic equations from observations supported on domains disjoint in both space and time, thereby significantly extending previous works in this direction \cite{FLO,KKLO,KRU,LO}. We emphasize that formulating inverse problems with data supported on disjoint sets is motivated not only by theoretical interest, but also by experimental setups where the excitation and observation regions cannot physically overlap.

The proof of Theorem \ref{t1} hinges on the interplay between the local hyperbolic dynamics and the specific structure of the nonlocal attenuation terms. More precisely, we exploit the interaction between the local differential operator and the nonlocal components in \eqref{eq1}. Properties of the local wave operator are coupled with memory effects governed by the time-fractional derivative, as well as the strong unique continuation property inherent to the fractional Laplacian. Theorem \ref{t1} thereby illustrates how nonlocal attenuation can serve as an effective mechanism to overcome longstanding bottlenecks in inverse coefficient problems for hyperbolic equations.

\section{Preliminary properties}\label{s3}

The main goal of this section is to establish the well-posedness of problem~\eqref{eq1} and its formal adjoint problem, together with suitable properties of the nonlocal operators appearing in \eqref{eq1}. Namely, we introduce some properties of unique continuation for fractional Laplacian  and memory effect for time fractional derivative as well as other qualitative properties of these operators that will be used in 
the proof of Theorem~\ref{t1}.

\subsection{Forward problem}
We state the uniqueness and existence of solutions to the IVP \eqref{eq1}.

\begin{prop}
\label{p1} Let $c\in C^1([0,T]\times\R^n;(0,+\infty))$, $a=(a_{ij})_{1\leq i,j\leq n}\in C^1([0,T]\times\R^n;\R^{n\times n})$, $B=(b_k)_{1\leq k\leq n+1}\in C^1([0,T]\times\R^n;\R^{n+1})$, $d,e\in C^1([0,T]\times\R^n)$, $F\in L^2((0,T)\times\R^n)$ and assume that 
\eqref{coe1}-\eqref{coe4} are fulfilled. Then there exists a unique solution $u\in C([0,T]; H^1(\R^n))\cap C^1([0,T]; L^2(\R^n))$ to the IVP \eqref{eq1} which satisfies the following estimate 
$$\norm{u}_{C([0,T]; H^1(\R^n))}+\norm{u}_{C^1([0,T]; L^2(\R^n))}\leq C\norm{F}_{L^2((0,T)\times\R^n)},$$
with $C>0$ depending only on $T$, $n$, $\alpha$, $s$, $a$, $B$, $c$, $d$, $e$.

\end{prop}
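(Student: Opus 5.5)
We prove Proposition \ref{p1} by a Galerkin (or, equivalently, fixed-point) argument built on energy estimates, treating the two nonlocal terms as perturbations of the local hyperbolic operator $P_{a,B,c}$.

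\emph{A priori estimate.} Let $u$ be a smooth solution. Multiply the equation in \eqref{eq1} by $\partial_t u$, integrate over $(0,t)\times\R^n$, and introduce the energy
$$E(t)=\frac12\int_{\R^n}\Big(c^{-2}|\partial_t u|^2+\sum_{i,j=1}^n a_{ij}\partial_{x_j}u\,\partial_{x_i}u+|u|^2\Big)(t,x)\,dx,$$
where the $|u|^2$ term is handled through $u(t)=\int_0^t\partial_t u$. The local terms are standard. By \eqref{coe3}-\eqref{coe4}, $E(t)$ controls $\norm{u(t)}_{H^1}^2+\norm{\partial_tu(t)}_{L^2}^2$. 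Because $c\in C^1$ and $a\in C^1$, the time derivatives of the coefficients produce only terms bounded by $CE$, and the $B$ term is bounded by $C E$.

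\emph{The nonlocal terms.} For the fractional Laplacian, since $s\leq 1/2$, $(-\Delta)^s$ maps $H^1(\R^n)$ into $L^2(\R^n)$ boundedly, indeed into $H^{1-2s}$. Hence
$$\Big|\int_0^t\!\!\int e(-\Delta)^s u\,\partial_t u\Big|\leq C\int_0^t E.$$
This is where $s\in(0,1/2]$ is used; $e$ is merely bounded by \eqref{coe1}. For the Caputo term, write $\partial_t^\alpha u=I^{1-\alpha}\partial_t u$, where $I^{1-\alpha}$ is the Riemann--Liouville integral, which is bounded on $L^2(0,t)$ with norm at most $Ct^{1-\alpha}/\Gamma(2-\alpha)$. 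Since $d\in L^\infty$, we get
$$\Big|\int_0^t\!\!\int d\,\partial_t^\alpha u\,\partial_tu\Big|\leq C\int_0^t\norm{\partial_tu(\tau)}_{L^2}^2d\tau.$$
No positivity of $d$ is needed. Combining these bounds with the source term, estimated by $\norm F\norm{\partial_tu}$, Gronwall's inequality gives
$$\sup_t E(t)\leq C\norm F^2_{L^2((0,T)\times\R^n)}.$$

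\emph{Existence.} Run Galerkin approximations on a Hermite (or other $H^1$-dense) basis. The approximate problems are linear Volterra integro-differential ODE systems, which are uniquely solvable. The uniform bound above yields weak-$*$ limits in $L^\infty(0,T;H^1)\cap W^{1,\infty}(0,T;L^2)$, and we can pass to the limit in the nonlocal terms because they are bounded linear operators. Alternatively, and more simply, first solve $P_{a,B,c}u=G$ using classical theory for time-dependent hyperbolic operators with $C^1$ coefficients. Then solve $u=\mathcal S\big(F-d\partial_t^\alpha u-e(-\Delta)^su\big)$ by a Picard iteration. The Volterra structure, via the $t^{1-\alpha}$ and $t$ smallness on short intervals, makes the map a contraction on $[0,T_0]$, and we iterate on successive intervals.

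\emph{Continuity and uniqueness.} Continuity in time, $u\in C([0,T];H^1)\cap C^1([0,T];L^2)$, follows from the standard Lions--Magenes regularization argument. Alternatively, in the fixed-point version it is inherited directly from the hyperbolic solution operator $\mathcal S$. Uniqueness follows from the energy estimate with $F=0$; rigorously justifying that estimate for weak solutions also goes via regularization.

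The main obstacle is justifying the energy identity at the regularity level $C([0,T];H^1)\cap C^1([0,T];L^2)$ with time-dependent $a$ and $c$. I would avoid it by relying on the fixed-point formulation around the known well-posedness for $P_{a,B,c}$, so that the only new estimates needed are the boundedness of $d\partial_t^\alpha$ and $e(-\Delta)^s$ from the energy space into $L^2$, with small Volterra constants.
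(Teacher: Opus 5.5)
Your proposal is correct. The route you ultimately commit to is essentially the paper's proof: solve around the well-posed local operator $P_{a,B,c}$, treat $d\partial_t^\alpha$ and $e(-\Delta)^s$ as perturbations bounded from the energy space into $L^2$ (using $s\leq 1/2$ and the integrability of $\tau^{-\alpha}$), and let the Volterra structure give a contraction. The paper writes \eqref{eq1} as $U=\mathcal G U$ through the evolution family $U_0(t,\tau)$ and Duhamel's formula. It then shows that an iterate $\mathcal G^{m_0}$ is contractive on all of $[0,T]$, using $\norm{\mathcal J W(t)}_{\mathcal H}\leq C\int_0^t\norm{W(\tau)}_{\mathcal H}d\tau$, and gets the estimate by Gronwall. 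This avoids your step-by-step continuation, where the Caputo memory from earlier subintervals must be carried as a known source. Uniqueness there comes from uniqueness of the fixed point, so your energy/Galerkin argument and its regularization issue are not needed.
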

\begin{proof}
We consider a proof based on a fixed-point argument and a classical unique 
existence result for hyperbolic equations. 
In view of \cite[Chap. 3, Theorem 8.2]{LM1}, for every $\tau\in[0,T]$,  $(v_0,v_1) \in \mathcal H:=H^1(\R^n)\times L^2(\R^n)$, 
we can consider the $ C([0,T];H^1(\R^n)) \cap  C^1([0,T];L^2(\R^n))$-solution 
$v_\tau$ to the problem
\begin{equation}
\label{eq3}
\left\{
\begin{aligned}
& c(t,x)^{-2}\partial_t^2 v_\tau-\sum_{i,j=1}^n\partial_{x_i}(a_{ij}(t,x)\partial_{x_j}v_\tau)+B'(t,x)\cdot\nabla_xv_\tau   = 0, &&\quad \textrm{in } (0,T)\times \R^n,\\
& v_\tau(\tau,x) = v_0(x),\quad \partial_t v_\tau(\tau,x) = v_1(x),                                             &&\quad x\in\R^n,
\end{aligned}
\right.
\end{equation}
where $B=(b_0,B')$.
Then, we introduce the operator 
\begin{equation}
\label{def-U0}
U_0(t,\tau) : \mathcal H\ni(v_0,v_1) \to (v_\tau(t),\partial_t v_\tau(t))\in\mathcal H,\quad t\in[0,T]
\end{equation} 
and recall that $(t,\tau)\mapsto U(t,\tau)\in C([0,T]^2;\mathcal B (\mathcal H))$. 
Here and henceforth, we denote by $\mathcal B(X,Y)$ the set of linear bounded operators from 
the Banach space $X$ to the Banach space $Y$, 
and we write $\mathcal B(X)$ instead of $\mathcal B(X,X)$. 

In light of \cite[Chap. 3, Theorem 8.2]{LM1} and the Duhamel's principle (see e.g. \cite{Ki101,Ki102} and \cite[Section 5]{Pe}), for $d=e\equiv0$,  it is well known that \eqref{eq1} admits 
a unique solution $u\in C^1([0,T];L^2(\R^n))\cap C([0,T];H^1(\R^n))$ such that 
$U_1:=(u,\partial_t u) \in  C([0,T];\mathcal H)$ reads
$$U_1(t)=\int_0^tU_0(t,s)(0,F(s))^Tds,\ t \in [0,T],$$
and satisfies the estimate
\begin{equation}
\label{es}
\norm{U_1}_{ C([0,T];\mathcal H)}
\leq \norm{U_0}_{ C([0,T]^2;\mathcal B(\mathcal H))}\sqrt{T} \norm{F}_{L^2((0,T)\times\R^n)}.
\end{equation}
Let us also fix
$$A(t):=\left(\begin{array}{ll}0&0\\ e(t,\cdot)(-\Delta)^{s}& b_0(t,\cdot)  \end{array}\right)\quad 
H(t,\tau):=\left(\begin{array}{ll}0&0\\ 0& d(t,\cdot)\frac{\tau^{-\alpha}}{\Gamma(1-\alpha)}  \end{array}\right),\quad t,\tau\in (0,T].$$
Let $\mathcal G$ be the map defined on $C([0,T];\mathcal H)$ by 
\begin{equation}
\label{FP-eq2}
(\mathcal GK)(t):=U_1(t)-\int_0^tU_0(t,s)A(s)K(s)ds-\int_0^t\int_0^sU_0(t,s)H(s,s-\tau)K(\tau) d\tau ds,
\end{equation}
for all $K\in C([0,T];\mathcal H)$ and $t\in[0,T]$. It is clear that $\mathcal G$ is a continuous map from $C([0,T];\mathcal H)$ into itself. Moreover, applying Duhamel's principle, we know that
$u \in C([0,T];H^1(\R^n)) \cap C^1([0,T];L^2(\R^n))$ solves
\eqref{eq1} if and only if $U:=(u,\partial_t u)$ is a $C([0,T];\mathcal H)$-solution to 
the integral equation $U=\mathcal G U$. Therefore, we are left with the task of proving that some iterate $\mathcal G^m$ of $\mathcal G$ is a contraction mapping 
on $C([0,T];\mathcal H)$. Recalling that $s\in(0,1/2]$, we observe that, for all $t,\tau \in (0,T]$, we have $A(t),H(t,\tau) \in \mathcal B(\mathcal H)$, and there exists a constant $C>0$ depending on $B,d,e$, $s$, $\alpha$, and $T$ such that 
\begin{equation}
\label{FP-eq3}
\norm{A(t)}_{\mathcal B(\mathcal H)}\leq C,\quad \norm{H(t,\tau)}_{\mathcal B(\mathcal H)} \leq C \tau^{-\alpha},\quad t,\tau\in(0,T).
\end{equation}
Fix $\mathcal J$ defined on $C([0,T];\mathcal H)$ by
$$\mathcal J V:=\mathcal G V - U_1,\quad V\in C([0,T];\mathcal H).$$ Applying Fubini's theorem, for all $t\in[0,T]$, we obtain
\begin{equation}\label{eq-tg}\begin{aligned}
    \norm{\mathcal J W(t)}_{\mathcal H}
&\leq  C\int_0^t\left(\norm{W(s)}_\mathcal H + \int_0^s (s-\tau)^{-\alpha} \norm{W(\tau)}_\mathcal H 
 d\tau\right)ds\\
&\leq  C\left( \int_0^t\norm{W(s)}_\mathcal H ds+ \int_0^t \int_\tau^t (s-\tau)^{-\alpha} \norm{W(\tau)}_\mathcal H 
ds d\tau\right) \nonumber\\
& \leq  C \int_0^t\left(1+\frac{(t-\tau)^{1-\alpha}}{1-\alpha}\right)\norm{W(\tau)}_\mathcal H d\tau\\
&\leq C\left(1+\frac{T^{1-\alpha}}{1-\alpha}\right)\int_0^t\norm{W(\tau)}_\mathcal H d\tau,\quad W\in  C([0,T];\mathcal H).
\end{aligned}
\end{equation}
Combining this estimate with \eqref{es}-\eqref{FP-eq2}, we observe that $\mathcal G$ maps 
$ C([0,T]; \mathcal H)$ into itself and  there exists $m_0\in\mathbb N$ such that $\mathcal G^{m_0}$ is contractive on $ C([0,T];\mathcal H)$. Therefore, from the Banach fixed-point theorem, we deduce that $\mathcal G$ admits a unique fixed point $U\in  C([0,T];\mathcal H)$.
  This proves the unique existence of  a $ C([0,T];H^1(\R^n)) \cap  C^1([0,T];L^2(\R^n))$-solution to 
\eqref{eq1}. In addition, fixing
$$D(t)=\sup_{s\in[0,t]}\norm{U(s)}_\mathcal H,\quad t\in[0,T]$$ and applying \eqref{es}, \eqref{FP-eq2}, \eqref{FP-eq3}, we obtain
$$D(t)\leq C_0\norm{F}_{L^2((0,T)\times\R^n)}+C_1\int_0^tD(s)ds,$$
where $C_0,C_1>0$ depend only on $A$, $B$, $c$, $d$, $e$, $T$, $\alpha$, $s$. Then, estimate \eqref{esta} follows from a direct application of the Gronwall's inequality. This completes the proof of the  lemma.

\end{proof}

We similarly consider the formal adjoint problem to \eqref{eq1} defined by
\begin{equation}
\label{eq1*}
\left\{
\begin{aligned}
& P_{a,B,c}^*v  -\partial_t^{\alpha*} (dv) +(-\Delta)^{s}(ev)  = G, &&\quad \textrm{in } (0,T)\times \R^n,\\                                                    
& v(T,x) = 0,\quad \partial_t v(T,x) = 0,                                             &&\quad x\in\R^n,
\end{aligned}
\right.
\end{equation}
where 
\bel{P*}P_{a,B,c}^*w=\partial_t^2 (c^{-2} w)-\sum_{i,j=1}^n\partial_{x_i}(a_{ij}(t,x)\partial_{x_j}w)-(\partial_t ,\nabla_x)\cdot (Bw),\quad w\in H^1((0,T)\times\R^n),\ee
$$\pa_t^{\alpha*}(d_1 v)(t,x)=\int_t^T\frac{(s-t)^{-\alpha}}{\Gamma(1-\alpha)}\partial_s (d_1v)(s,x)ds,\quad (t,x)\in(0,T)\times \R^n.$$
For any $\gamma\in(0,1)$, we fix also
\bel{I}I_t^\gamma w(t,x)=\int_0^t\frac{(t-s)^{\gamma-1}}{\Gamma(\gamma)}w(s,x)ds,\quad w\in L^1((0,T)\times U_1),\ (t,x)\in(0,T)\times U_1.\ee
We recall the following properties of the fractional derivative in times $\pa_t^{\alpha}$ and its formal adjoint $\pa_t^{\alpha*}$.

\begin{lem}\label{l1} Let $H$ be a Hilbert space and consider $\phi_1,\phi_2\in C^1([0,T];H)$ such that $\phi_1(0)=\phi_2(T)=0$. Then, fixing $\widetilde{\phi_2}(t)=\phi_2(T-t)$, $t\in[0,T]$, the following properties
\bel{l1a}\partial_t^{\alpha*}\phi_2(t)=-\partial_t^\alpha\widetilde{\phi_2}(T-t),\quad t\in[0,T],\ee
\bel{l1b}\pa_t^{\alpha*}\phi_2(t)=\partial_t\left(\int_t^{T}\frac{(\tau-t)^{-\alpha}}{\Gamma(1-\alpha)} \phi_2(\tau)d\tau\right),\quad t\in[0,T],\ee
\bel{l1c}\int_0^T \left\langle \partial_t^{\alpha}\phi_1(t),\phi_2(t)\right\rangle_H dt=-\int_0^T\left\langle \phi_1(t),\partial_t^{\alpha*}\phi_2(t)\right\rangle_H dt,\ee
\bel{l1d}\partial_t^{\alpha}I_t^\alpha\phi_1(t)=\phi_1(t),\quad t\in[0,T]\ee
 hold true.   
\end{lem}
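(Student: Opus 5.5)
The four identities are elementary facts about Riemann--Liouville/Caputo operators. I would verify them one at a time by direct computation in the Bochner-integral setting for $H$-valued $C^1$ functions.

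For \eqref{l1a}, substitute $\tau = T-\sigma$ in the definition of $\pa_t^{\alpha*}\phi_2(t)$. Then $\partial_\tau\phi_2(\tau) = -\widetilde{\phi_2}'(\sigma)$. The interval $\tau\in(t,T)$ becomes $\sigma\in(0,T-t)$, and $\tau-t = (T-t)-\sigma$. The integral is therefore exactly $-\frac{1}{\Gamma(1-\alpha)}\int_0^{T-t}((T-t)-\sigma)^{-\alpha}\widetilde{\phi_2}'(\sigma)\,d\sigma = -\partial_t^\alpha\widetilde{\phi_2}(T-t)$.

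For \eqref{l1b}, write $\int_t^T(\tau-t)^{-\alpha}\phi_2(\tau)\,d\tau = \int_0^{T-t}r^{-\alpha}\phi_2(t+r)\,dr$. Differentiating in $t$ has two effects. The derivative falls on the integrand, giving $\int_0^{T-t}r^{-\alpha}\phi_2'(t+r)\,dr$; this is justified by dominated convergence, since $\phi_2'$ is continuous and $r^{-\alpha}$ is integrable. The moving endpoint contributes $-(T-t)^{-\alpha}\phi_2(T)$, which vanishes because $\phi_2(T)=0$. Dividing by $\Gamma(1-\alpha)$ gives \eqref{l1b}. At $t=T$ both sides are read as limits, and they equal $0$.

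For \eqref{l1c}, I would apply Fubini to $\int_0^T\int_0^t (t-s)^{-\alpha}\langle\phi_1'(s),\phi_2(t)\rangle\,ds\,dt$, which is absolutely integrable. This turns it into $\int_0^T\langle\phi_1'(s),\Psi(s)\rangle\,ds$, where $\Psi(s) = \int_s^T(t-s)^{-\alpha}\phi_2(t)\,dt/\Gamma(1-\alpha)$. Next integrate by parts in $s$. The boundary terms vanish because $\phi_1(0)=0$ and $\Psi(T)=0$. By \eqref{l1b}, $\Psi' = \pa_t^{\alpha*}\phi_2$, so the result is $-\int_0^T\langle\phi_1,\pa_t^{\alpha*}\phi_2\rangle\,dt$. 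The main technical care in the whole lemma sits here. One must check that $\Psi\in C^1([0,T];H)$, or at least $W^{1,1}$, so that integration by parts is legitimate. The representation from the proof of \eqref{l1b}, namely $\Psi'(s) = \frac{1}{\Gamma(1-\alpha)}\int_0^{T-s}r^{-\alpha}\phi_2'(s+r)\,dr$, is continuous in $s$, and that settles it.

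For \eqref{l1d}, set $w = I_t^\alpha\phi_1$. Changing variables gives $w(t) = \frac{1}{\Gamma(\alpha)}\int_0^t r^{\alpha-1}\phi_1(t-r)\,dr$. Hence $w\in C^1$ with $w'(t) = \frac{1}{\Gamma(\alpha)}\int_0^t r^{\alpha-1}\phi_1'(t-r)\,dr$; the boundary term $t^{\alpha-1}\phi_1(0)$ vanishes because $\phi_1(0)=0$. Thus $w' = I_t^\alpha\phi_1'$. Then
\[
\partial_t^\alpha w = I_t^{1-\alpha}w' = I_t^{1-\alpha}I_t^\alpha\phi_1' = I_t^1\phi_1' = \phi_1(t)-\phi_1(0) = \phi_1(t).
\]
The semigroup identity $I^{1-\alpha}I^\alpha = I^1$ follows from Fubini and the Beta integral $\int_\tau^t(t-s)^{-\alpha}(s-\tau)^{\alpha-1}\,ds = \Gamma(\alpha)\Gamma(1-\alpha)$.
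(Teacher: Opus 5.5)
Your proof is correct. Each of the four identities is verified properly under the stated hypotheses. You use $\phi_2(T)=0$ exactly where it is needed: to kill the endpoint term in \eqref{l1b}, and hence to identify $\Psi'$ with $\pa_t^{\alpha*}\phi_2$ in \eqref{l1c}. You use $\phi_1(0)=0$ for the boundary term in \eqref{l1c} and to obtain $w'=I_t^\alpha\phi_1'$ in \eqref{l1d}.

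The paper does not prove this lemma itself; it only refers to \cite[Lemma 3.2]{KRU} and \cite[Section 2.3.3]{Po}. Your direct computation (change of variables, Leibniz rule, Fubini plus integration by parts, and the Beta-integral semigroup identity) is therefore a self-contained replacement for those citations.

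The one place where a different route is common is \eqref{l1d}. In the Riemann--Liouville framework one writes $D_t^\alpha I_t^\alpha\phi_1=\partial_t I_t^{1-\alpha}I_t^\alpha\phi_1=\partial_t I_t^1\phi_1=\phi_1$. One then passes to the Caputo derivative through \eqref{RC}, using only that $I_t^\alpha\phi_1(0)=0$ and $I_t^\alpha\phi_1\in W^{1,1}(0,T;H)$. That version shows \eqref{l1d} holds even without $\phi_1(0)=0$: the extra term $t^{\alpha-1}\phi_1(0)/\Gamma(\alpha)$ in the derivative is integrable, and $I_t^{1-\alpha}$ maps it to the constant $\phi_1(0)$. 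Your route trades that generality for the convenience of having $w'=I_t^\alpha\phi_1'$ continuous. Since $\phi_1(0)=0$ is a hypothesis of the lemma, both are fine.

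A minor point of phrasing concerns $t=T$ in \eqref{l1b}. The clean statement is that $\Psi$ is continuous on $[0,T]$ and differentiable on $[0,T)$, with a derivative that tends to $0$ as $t\to T$. Hence $\Psi\in C^1([0,T];H)$ with $\Psi'(T)=0=\pa_t^{\alpha*}\phi_2(T)$, by the vector-valued mean value inequality. Your continuity remark in the \eqref{l1c} step already supplies exactly this.
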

One can refer to \cite[Lemma 3.2]{KRU} 
and \cite[Section 2.3.3]{Po} for the proof of Lemma \ref{l1}.

Similarly to \eqref{eq1}, we can prove the well-posedness of \eqref{eq1*} as follows.

\begin{prop}
\label{p2} Let $c\in C^2([0,T]\times\R^n;(0,+\infty))$, $a=(a_{ij})_{1\leq i,j\leq n}\in C^1([0,T]\times\R^n;\R^{n\times n})$, $B=(b_k)_{1\leq k\leq n+1}\in C^1([0,T]\times\R^n;\R^{n+1})$, $d,e\in C^1([0,T]\times\R^n)$, $G\in L^2((0,T)\times\R^n)$ and assume that 
\eqref{coe1}-\eqref{coe4} are fulfilled. Then there exists a unique solution $v\in C([0,T]; H^1(\R^n))\cap C^1([0,T]; L^2(\R^n))$ to \eqref{eq1*} which satisfies the following estimate 
\bel{esta}\norm{v}_{C([0,T]; H^1(\R^n))}+\norm{v}_{C^1([0,T]; L^2(\R^n))}\leq C\norm{G}_{L^2((0,T)\times\R^n)},\ee
with $C>0$ depending only on $T$, $n$, $\alpha$, $s$, $a$, $B$, $c$, $d$, $e$.

\end{prop}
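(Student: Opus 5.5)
The plan is to reduce \eqref{eq1*} to an initial value problem of the same type as \eqref{eq1} by reversing time, and then to repeat the fixed-point argument of Proposition \ref{p1}. Set $w(t,x)=v(T-t,x)$ and $\widetilde G(t,x)=G(T-t,x)$. Since $c\in C^2$, we can expand $\partial_t^2(c^{-2}w)=c^{-2}\partial_t^2w+2\partial_t(c^{-2})\partial_tw+\partial_t^2(c^{-2})w$. Likewise $(\partial_t,\nabla_x)\cdot(Bw)=B\cdot(\partial_t w,\nabla_x w)+(\partial_t b_0+\mathrm{div}_x B')w$. After these expansions, $P^*_{a,B,c}$ becomes a second order hyperbolic operator with principal part $c^{-2}\partial_t^2-\sum\partial_{x_i}(a_{ij}\partial_{x_j}\cdot)$, first order coefficients in $C^1$, and a zeroth order potential in $C^0$. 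All of these are compactly supported perturbations of the flat case thanks to \eqref{coe2}. Under $t\mapsto T-t$ the principal part is unchanged, the sign of the time first-order term flips, and the coefficients become $\widetilde a(t,x)=a(T-t,x)$, and so on. The classical result \cite[Chap. 3, Theorem 8.2]{LM1} then gives a propagator $\widetilde U_0(t,\tau)\in C([0,T]^2;\mathcal B(\mathcal H))$ for the time-reversed local part. A bounded zeroth order potential can either be included there or treated as part of the perturbation $A$.

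Next we handle the nonlocal terms. By \eqref{l1a} of Lemma \ref{l1}, applied formally with $\phi_2=dv$ (which vanishes at $t=T$ since $v(T)=0$), we get $-\partial_t^{\alpha*}(dv)(T-t)=\partial_t^\alpha(\widetilde d w)(t)$ with $\widetilde d(t,x)=d(T-t,x)$. Then $\partial_t^\alpha(\widetilde dw)=\widetilde d\,\partial_t^\alpha w+R w$, where
$$Rw(t)=\frac{1}{\Gamma(1-\alpha)}\int_0^t(t-\tau)^{-\alpha}\partial_\tau\widetilde d(\tau)w(\tau)\,d\tau.$$
Since $d\in C^1$, the operator $R$ is a weakly singular Volterra operator that is bounded on $C([0,T];H^1)$. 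The spatial term $(-\Delta)^s(\widetilde e w)$ is bounded from $H^1$ to $L^2$, because $2s\le1$ and $\widetilde e\in C^1$ has compact support, so multiplication by $\widetilde e$ preserves $H^1$. Hence the time-reversed problem reads, with zero initial data,
$$\widetilde P w+\widetilde d\,\partial_t^\alpha w+Rw+(-\Delta)^s(\widetilde e w)=\widetilde G.$$
We then define $\mathcal G$ exactly as in \eqref{FP-eq2}. The operator $A(t)$ now carries $(-\Delta)^s(\widetilde e\,\cdot)$ together with the zeroth and first order terms, and $H(t,\tau)$ carries both $\widetilde d(t)\tau^{-\alpha}/\Gamma(1-\alpha)$ acting on the second component and the kernel of $R$ acting on the first component. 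In both cases $\norm{H(t,\tau)}_{\mathcal B(\mathcal H)}\le C\tau^{-\alpha}$, so \eqref{FP-eq3} holds. The Fubini estimate \eqref{eq-tg} then gives $\norm{\mathcal J W(t)}_{\mathcal H}\le C\int_0^t\norm{W(\tau)}_{\mathcal H}d\tau$. Iterating, we obtain $\norm{\mathcal J^m}\le (CT)^m/m!$, so some iterate of $\mathcal G$ is a contraction and a unique fixed point exists. Gronwall's inequality applied to $D(t)=\sup_{[0,t]}\norm{W}_{\mathcal H}$ yields \eqref{esta} for $w$, and hence for $v$.

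The main obstacle is justifying that the Duhamel/fixed-point formulation really is equivalent to \eqref{eq1*} at the regularity $C([0,T];H^1)\cap C^1([0,T];L^2)$. The issue is that $\partial_t^{\alpha*}(dv)$ involves $\partial_t v\in L^2$ only, and $\partial_t^2(c^{-2}v)$ must be interpreted in the distributional sense. I would handle this by first working with smooth $G$, where the solution is more regular and Lemma \ref{l1} applies directly, and then passing to the limit via the a priori estimate. Uniqueness follows from the same contraction argument applied with $G=0$.
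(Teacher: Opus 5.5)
This is essentially the paper's proof: time reversal via \eqref{l1a} to the IVP \eqref{eq2*}, then the fixed-point and Gronwall argument of Proposition \ref{p1}. Your remainder $Rw$ coming from $\partial_\tau\widetilde d$ is actually needed, since the paper's $H_*$ acts only on the second component and omits it.

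There is one slip: $\partial_t^\alpha(\widetilde d w)\neq\widetilde d\,\partial_t^\alpha w+Rw$ when $\widetilde d$ depends on $t$. The first piece must be $\frac{1}{\Gamma(1-\alpha)}\int_0^t(t-\tau)^{-\alpha}\widetilde d(\tau)\partial_\tau w(\tau)\,d\tau$, with the coefficient evaluated at $\tau$ as in $H_*(s-\tau,\tau)$. This kernel obeys the same $C(t-\tau)^{-\alpha}$ bound on $\mathcal H$, so the contraction argument is unaffected. Note also that $R$ maps into $C([0,T];L^2(\R^n))$, not $C([0,T];H^1(\R^n))$, since only $d\in C^1$ is assumed. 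That is all the second component requires, so this does not affect the argument either.
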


\begin{proof} From now on for $f\in C([0,T];X)$, with $X$ a Banach space, we denote its time reversal $\tilde{f}(t):=f(T-t)$. Applying Lemma \ref{l1}, one can check that $w=\tilde{v}$ solves the IVP
\begin{equation}
\label{eq2*}
\left\{
\begin{aligned}
& \mathcal Q w+\partial_t^{\alpha} (\tilde{d}w) +(-\Delta)^{s}(\tilde{e}w) = \tilde{G}, &&\quad \textrm{in } (0,T)\times \R^n,\\                                                    
& w(0,x) = 0,\quad \partial_t w(0,x) = 0,                                             &&\quad x\in\R^n,
\end{aligned}
\right.
\end{equation}
 where 
 $$\mathcal Q w=\tilde{c}^{-2}\partial_t^2 w-\sum_{i,j=1}^n\partial_{x_i}(\tilde{a}_{ij}(t,x)\partial_{x_j}w)+B_1(t,x)\cdot\nabla_xw+p(t,x)\partial_tw+q(t,x)w,$$
 with
 $$B_1=-\tilde{B'},\quad p=\tilde{b_0}+2\partial_t(\tilde{c}^{-2}),\quad q=-(-\partial_t,\nabla_x)\cdot \tilde{B}+\partial_t^2(\tilde{c}^{-2}).$$

In view of \cite[Chap. 3, Theorem 8.2]{LM1}, for every $\tau\in[0,T]$,  $(f_0,f_1) \in \mathcal H$, 
we can consider the $ C([0,T];H^1(\R^n)) \cap  C^1([0,T];L^2(\R^n))$-solution 
$y_\tau$ to the problem
\begin{equation}
\label{eq3*}
\left\{
\begin{aligned}
& \tilde{c}^{-2}\partial_t^2 y_\tau-\sum_{i,j=1}^n\partial_{x_i}(\tilde{a}_{ij}(t,x)\partial_{x_j}y_\tau)+B_1(t,x)\cdot\nabla_xy_\tau  +q(t,x) y_\tau= 0, &&\quad \textrm{in } (0,T)\times \R^n,\\
& y_\tau(\tau,x) = f_0(x),\quad \partial_t y_\tau(\tau,x) = f_1(x),                                             &&\quad x\in\R^n.
\end{aligned}
\right.
\end{equation}
Then, we introduce the operator 
$$V_0(t,\tau) : \mathcal H\ni(f_0,f_1) \to (y_\tau(t),\partial_t y_\tau(t))\in\mathcal H$$
and recall that $(t,\tau)\mapsto V_0(t,\tau)\in C([0,T]^2;\mathcal B (\mathcal H))$. 
In light of \cite[Chap. 3, Theorem 8.2]{LM1} and the Duhamel's principle, for $d=e\equiv0$,  it is well known that \eqref{eq2*} admits 
a unique solution $w\in C^1([0,T];L^2(\R^n))\cap C([0,T];H^1(\R^n))$ such that 
$W_1:=(w,\partial_t w) \in  C([0,T];\mathcal H)$ reads
$$W_1(t)=\int_0^tV_0(t,s)(0,\tilde{G}(s))^Tds,\ t \in [0,T].$$
Let us also fix
\begin{equation}
\label{def-Q}A_*(t):=\left(\begin{array}{ll}0&0\\ (-\Delta)^{s}(\tilde{e}(t,\cdot)\cdot)& p(t,\cdot)  \end{array}\right)\quad 
H_*(t,\tau):=\left(\begin{array}{ll}0&0\\ 0& \tilde{d}(\tau,\cdot)\frac{t^{-\alpha}}{\Gamma(1-\alpha)}  \end{array}\right),\  t,\tau\in (0,T]
\end{equation}
where $(-\Delta)^{s}(\tilde{e}(t,\cdot)\cdot)$ is defined by
$$(-\Delta)^{s}(\tilde{e}(t,\cdot)\cdot):H^1(\R^n)\ni v\mapsto (-\Delta)^{s}(\tilde{e}(t,\cdot)v)\in L^2(\R^n).$$
Let $\mathcal G_*$ be the map defined on $C([0,T];\mathcal H)$ by  
$$(\mathcal G_*K)(t):=W_1(t)-\int_0^tV_0(t,s)A_*(s)K(s)ds-\int_0^t\int_0^sV_0(t,s)H_*(s-\tau,\tau)K(\tau) d\tau ds,$$
for all $K\in C([0,T];\mathcal H)$ and $t\in[0,T]$. Applying the Duhamel's principle, we know that
$w \in  C([0,T];H^1(\R^n)) \cap  C^1([0,T];L^2(\R^n))$ solves
\eqref{eq2*} if and only if $W:=(w,\partial_t w)$ is a $ C([0,T];\mathcal H)$-solution to 
the integral equation $W=\mathcal G_*W$.
Repeating the argumentation of Proposition \ref{p1}, we can prove that the exists $m_0\in\mathbb N$ such that $\mathcal G_*^{m_0}$ is contractive on $ C([0,T];\mathcal H)$ and we can complete the proof of the proposition.
 
\end{proof}

\subsection{Unique continuation with moving domain and memory effect}

We first consider a generalization of the unique continuation principle for the fractional Laplacian established in \cite[Theorem 1.2]{GSU} (see also \cite{Ri} for earlier related contributions, as well as \cite{FKU, FLi} for related extensions of this result) to moving domain.

\begin{prop}
\label{p3} Let $R\in C([0,T];L^2(\R^n))$, $0\leq t_1<t_2\leq T$ and $\{\mathcal U(t):\ t\in[t_1,t_2]\}$ a family of not-empty open subset of $\R^n$. Then, the condition
\bel{p3a} R(t,x)=(-\Delta)^{s}R(t,x)=0,\quad t\in(t_1,t_2),\ x\in \mathcal U(t)\ee
implies that $R=0$ on $(t_1,t_2)\times\R^n$.

\end{prop}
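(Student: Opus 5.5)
The plan is to reduce the statement to the stationary unique continuation result of \cite[Theorem 1.2]{GSU} applied at each fixed time, and then to pass from ``almost every $t$'' to the whole interval using the continuity of $R$ in time. Recall that \cite[Theorem 1.2]{GSU} asserts: if $h\in H^{r}(\R^n)$ for some $r\in\R$, $s\in(0,1)$, and $h=(-\Delta)^s h=0$ on a nonempty open set $\mathcal V\subset\R^n$, then $h\equiv0$ on $\R^n$. Since $R(t,\cdot)\in L^2(\R^n)=H^0(\R^n)$ for every $t$, the function $(-\Delta)^sR(t,\cdot)$ is well defined in $H^{-2s}(\R^n)$. Its restriction to the open set $\mathcal U(t)$ therefore makes sense as a distribution.

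First I will make \eqref{p3a} precise. Since $R(t,\cdot)\in L^2$ and $(-\Delta)^sR(t,\cdot)\in H^{-2s}$, I will interpret \eqref{p3a} in the following sense: for each $t\in(t_1,t_2)$, one has $R(t,\cdot)|_{\mathcal U(t)}=0$ and $(-\Delta)^sR(t,\cdot)|_{\mathcal U(t)}=0$ in $\mathcal D'(\mathcal U(t))$. If instead the condition is only given almost everywhere on the set $\{(t,x): t\in(t_1,t_2),\ x\in\mathcal U(t)\}$, I will use Fubini's theorem to extract a full-measure set $\mathcal T\subset(t_1,t_2)$ on which the slice condition holds for each $t$. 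For the distributional identity involving $(-\Delta)^s R$, I will test against $\phi\in C_0^\infty(\mathcal U(t))$ and argue slice-wise in the same way.

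Next comes the main step. For each $t\in\mathcal T$, the function $h=R(t,\cdot)\in H^0(\R^n)$ satisfies $h=(-\Delta)^sh=0$ on the nonempty open set $\mathcal U(t)$. Hence \cite[Theorem 1.2]{GSU} gives $R(t,\cdot)=0$ in $L^2(\R^n)$. The fact that $\mathcal U(t)$ moves with $t$ plays no role here, because the argument is purely pointwise in time: no regularity of $t\mapsto\mathcal U(t)$ is needed, only that each $\mathcal U(t)$ is open and nonempty.

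Finally, $\mathcal T$ is dense in $(t_1,t_2)$ and $R\in C([0,T];L^2(\R^n))$, so $R(t,\cdot)=0$ for all $t\in(t_1,t_2)$. Hence $R=0$ on $(t_1,t_2)\times\R^n$.

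The only delicate point I anticipate is the measure-theoretic bookkeeping in the almost-everywhere interpretation. The difficulty is that the set $\{(t,x): x\in\mathcal U(t)\}$ need not be measurable without some assumption on $t\mapsto\mathcal U(t)$. I will sidestep this by adopting the slice-wise (for each $t$) reading of \eqref{p3a}, which is the form in which the statement will be used later.
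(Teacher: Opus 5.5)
Your proposal is correct and follows the paper's proof. Both fix $t$, apply Theorem 1.2 of Ghosh--Salo--Uhlmann to $R(t,\cdot)\in L^2(\mathbb{R}^n)$ on the open set $\mathcal U(t)$, and conclude slice by slice, so the motion of $\mathcal U(t)$ plays no role. Your extra almost-everywhere bookkeeping, closed up using $R\in C([0,T];L^2(\mathbb{R}^n))$, is a harmless refinement; the paper skips it because it reads the hypothesis slice-wise for every $t$.
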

\begin{proof}
Fix $t\in(t_1,t_2)$,  $v=R(t,\cdot)\in L^2(\R^n)$,  and $\mathcal O=\mathcal U(t)$. Then, condition \eqref{p3a} implies that 
$$v|_\mathcal O\equiv0,\quad (-\Delta)^{s}v|_\mathcal O\equiv0.$$
Applying \cite[Theorem 1.2]{GSU}, we deduce that $R(t,\cdot)=v= 0$ on $\R^n$. This clearly implies that $R=0$ on $(t_1,t_2)\times\R^n$.

\end{proof}

Let us  recall the definition of the Riemann-Liouville time fractional derivative $D^\alpha_t$ of order $\alpha$
defined on $L^1(0,T; L^2(U_1))$, with $U_1$ an open not empty subset of $\R^n$, by
$$D^\alpha_tw(t,x)=\partial_tI^{1-\alpha}_tw(t,x),\quad w\in L^1(0,T; L^2(U_1)),\ (t,x)\in(0,T)\times U_1.$$
Here the time fractional integral operator $I^{1-\alpha}_t$ is defined by \eqref{I}
 with $\gamma=1-\alpha$. Recalling that, for all $w\in L^1(0,T; L^2(U_1))$,  $I^{1-\alpha}_tw\in L^1(0,T; L^2(U_1))$, we deduce that $D^\alpha_tw$ can be seen as an element of $D'(0,T;L^2(U_1))$. Note also that we have
\bel{RC}D^\alpha_t[w-w(0,\cdot)](t,x)=\partial_t^\alpha w(t,x),\quad w\in W^{1,1}(0,T; L^2(U_1)),\ (t,x)\in(0,T)\times U_1.\ee
We recall the following result of \cite{KRU} (see also \cite{JK} for similar results under stronger regularity assumptions), which highlights the memory effect associated with the time-fractional derivative.

\begin{prop}\label{p4} \emph{(Theorem A.1, \cite{KRU})}
Let $w\in L^1(0,T;L^2(U_1))$, $g\in L^2(U_1)$, and fix $\epsilon\in(0,T)$. Then the condition
\bel{p4a}
w(t,x)=D_t^\alpha (w-g(x))(t,x)=0,\ (t,x)\in (T-\epsilon,T)\times U_1\ee
implies that 
\bel{p4b}w(t,x)=g(x)=0,\ (t,x)\in (0,T)\times U_1.\ee

\end{prop}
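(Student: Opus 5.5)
The plan is to test everything against a fixed $\phi\in L^2(U_1)$, which reduces the statement to a scalar problem. Set $w_\phi(t)=\langle w(t,\cdot),\phi\rangle_{L^2(U_1)}\in L^1(0,T)$ and $g_\phi=\langle g,\phi\rangle$. Since $w=0$ on $(T-\epsilon,T)\times U_1$, for $t\in(T-\epsilon,T)$ we have
$$I_t^{1-\alpha}(w_\phi-g_\phi)(t)=h(t):=\frac{1}{\Gamma(1-\alpha)}\int_0^{T-\epsilon}(t-s)^{-\alpha}w_\phi(s)\,ds-\frac{g_\phi\, t^{1-\alpha}}{\Gamma(2-\alpha)} .$$
The hypothesis $D_t^\alpha(w-g)=0$ on $(T-\epsilon,T)\times U_1$ says $h'=0$ in $\mathcal D'(T-\epsilon,T)$. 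Hence $h\equiv\kappa$ on $(T-\epsilon,T)$ for some constant $\kappa$.

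Next I would exploit analyticity in $t$. For $t>T-\epsilon\geq s$ we have $0\leq s/t\leq (T-\epsilon)/t<1$, so the binomial series can be used:
$$(t-s)^{-\alpha}=t^{-\alpha}\sum_{k\geq0}\beta_k (s/t)^k,\qquad \beta_k=\frac{(\alpha)_k}{k!}>0 .$$
It converges uniformly in $s\in[0,T-\epsilon]$ for $t$ in compact subsets of $(T-\epsilon,\infty)$. Integrating termwise against $w_\phi\in L^1$ gives
$$\frac{1}{\Gamma(1-\alpha)}\sum_{k\geq0}\beta_k m_k t^{-k}-\frac{g_\phi\, t}{\Gamma(2-\alpha)}-\kappa t^{\alpha}=0,\qquad m_k:=\int_0^{T-\epsilon}s^k w_\phi(s)\,ds .$$
This holds for $t\in(T-\epsilon,T)$, after multiplying $h-\kappa=0$ by $t^\alpha$. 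The left side is real-analytic on $(T-\epsilon,\infty)$, since it is a convergent power series in $1/t$ plus analytic terms. So the identity holds on all of $(T-\epsilon,\infty)$.

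Now let $t\to\infty$. The series tends to $\beta_0m_0/\Gamma(1-\alpha)$, so $g_\phi t+\kappa t^\alpha$ must stay bounded. Since $0<\alpha<1$, this forces $g_\phi=0$ and then $\kappa=0$. The remaining power series in $1/t$ then vanishes on an interval, so all its coefficients vanish. Because $\beta_k\neq0$, we get $m_k=0$ for every $k$.

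All moments of $w_\phi\in L^1(0,T-\epsilon)$ vanish, so by density of polynomials in $C([0,T-\epsilon])$ (Weierstrass) we get $w_\phi=0$ a.e. on $(0,T-\epsilon)$. It is also zero on $(T-\epsilon,T)$ by hypothesis. Since $\phi$ is arbitrary, $w=0$ on $(0,T)\times U_1$ and $g=0$, which is \eqref{p4b}.

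The main obstacle is the step where the identity is propagated from $(T-\epsilon,T)$ to $(T-\epsilon,\infty)$ with only $L^1$ regularity of $w$. I would handle it as above, using the explicit power series in $1/t$, which converges because the support of $w_\phi$ lies to the left of $T-\epsilon$. This avoids any general analytic-continuation argument. Passing from $h'=0$ in the distributional sense to $h$ being constant is routine, since $h$ is smooth on $(T-\epsilon,T)$.
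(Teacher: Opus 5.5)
The paper does not prove this statement; it imports it verbatim from \cite[Theorem A.1]{KRU}. So there is no in-paper argument to compare with, and your proposal has to stand on its own. It does: the proof is correct and self-contained.

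The key steps check out.
\begin{itemize}
\item Because $w_\phi$ vanishes on $(T-\epsilon,T)$, the function $h$ involves only an integral over $[0,T-\epsilon]$. It is therefore smooth on $(T-\epsilon,\infty)$, and $h'=0$ in $\mathcal D'(T-\epsilon,T)$ gives $h\equiv\kappa$.
\item The bound $|m_k|\leq (T-\epsilon)^k\|w_\phi\|_{L^1}$, combined with $\sum_k\beta_k r^k=(1-r)^{-\alpha}$ for $r<1$, justifies the termwise integration. It also shows the series in $z=1/t$ converges on $|z|<1/(T-\epsilon)$, which gives the analytic continuation to $(T-\epsilon,\infty)$.
\item As $t\to\infty$, the terms $t$, $t^\alpha$ and the bounded series have different growth rates. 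This separates $g_\phi$ from $\kappa$ and forces both to vanish.
\item Positivity of $\beta_k=(\alpha)_k/k!$ turns the vanishing of the series into vanishing of every moment. Weierstrass then gives $w_\phi=0$ a.e.
\end{itemize}

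What your route buys is an elementary proof that needs only $w\in L^1(0,T;L^2(U_1))$. It also makes the role of $g$ transparent: $g$ is detected by the $t^{1-\alpha}$ growth of $I^{1-\alpha}_t g$, which no contribution from $w$ can produce.

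One detail to tidy is the last step. From ``$w_\phi=0$ a.e.\ for each $\phi$'' you cannot pass directly to $w=0$, because the null set in $t$ depends on $\phi$. Either of the following fixes it:
\begin{itemize}
\item Run the argument for $\phi$ in a countable dense subset of the separable space $L^2(U_1)$.
\item Observe that $\langle\int_0^T\theta(t)w(t)\,dt,\phi\rangle=0$ for all $\theta\in C^\infty_0(0,T)$ and all $\phi\in L^2(U_1)$, which forces $w=0$ in $L^1(0,T;L^2(U_1))$.
\end{itemize}
Concluding $g=0$ from $g_\phi=0$ for all $\phi$ needs no such care.
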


\section{Proof of Theorem \ref{t1}}\label{s4}
This section is devoted to the proof of the main result of this article stated in Theorem \ref{t1}. Without loss of generality, we assume that \eqref{t1e} is fulfilled for $\epsilon\in(0,\delta)$ and we will prove that this condition implies that $a^1=a^2$, $B^1=B^2$, $c^1=c^2$, $d^1=d^2$. We divide the proof into 4 steps

\textbf{Step 1.} Applying Proposition \ref{p1},  we can consider $u_j\in C([0,T]; H^1(\R^n))\cap C^1([0,T]; L^2(\R^n))$, $j=1,2$, the solution of  \eqref{eq1} with $a=a^j$, $c=c^j$, $B=B^j$, $d=d^j$ and $F\in C^\infty_0((0,\epsilon)\times\mathcal O_1)$. In this step, we will prove that, for all $F\in C^\infty_0((0,\epsilon)\times\mathcal O_1)$,  $u_1=u_2$ on $[0,T]\times\R^n$. Fix $F\in C^\infty_0((0,\epsilon)\times\mathcal O_1)$, let $u=u_1-u_2\in C([0,T]; H^1(\R^n))\cap C^1([0,T]; L^2(\R^n))$ and observe that $u$ solves the problem
\begin{equation}
\label{eq5}
\left\{
\begin{aligned}
& P_{a^1,B^1,c^1}u +d^1(t,x) \partial_t^{\alpha} u +e(t,x)(-\Delta)^{s}u = \mathcal L u_2, &&\quad \textrm{in } (0,T)\times \R^n,\\                                                    
& u(0,x) = 0,\quad \partial_t u(0,x) = 0,                                             &&\quad x\in\R^n,
\end{aligned}
\right.
\end{equation}
where 
\bel{L}\begin{aligned}\mathcal L u_2=&((c^2)^{-2}-(c^1)^{-2})\partial_t^2 u_2-\sum_{i,j=1}^n\partial_{x_i}((a_{ij}^2-a_{ij}^1)\partial_{x_j}u_2)+(B^2-B^1)\cdot(\partial_t u_2,\nabla_xu_2)\\
&+(d^2-d^1)\partial_t^\alpha u_2.\end{aligned}\ee
In light of \eqref{coe2} and \eqref{t1dd}, we have
\bel{tata}\mathcal L u_2(t,x)=0,\quad (t,x)\in[0,T]\times (\R^n\setminus \overline{B_{R_0}})\supset Q_{\gamma,\tau,\epsilon}.\ee
Moreover, from condition \eqref{t1e}, we find $u|_{(T-\epsilon,T)\times \mathcal O_2}\equiv 0$ which implies that
$$P_{a^1,B^1,c^1}u(t,x)=0,\quad (t,x)\in (T-\epsilon,T)\times \mathcal O_2\supset Q_{\gamma,\tau,\epsilon}.$$
Combining these properties with \eqref{t1d}, we obtain
$$d^1(t,x)\partial_t^\alpha u(t,x)=\mathcal L u_2(t,x)=P_{a^1,B^1,c^1}u(t,x)=0,\quad (t,x)\in Q_{\gamma,\tau,\epsilon}.$$
Thus, \eqref{eq5} implies
$$\begin{aligned}&e(t,x)(-\Delta)^{s}u(t,x)=-P_{a^1,B^1,c^1}u(t,x)-d^1(t,x)\partial_t^\alpha u(t,x)+\mathcal L u_2(t,x)  =0,\quad  (t,x)\in Q_{\gamma,\tau,\epsilon}\end{aligned}$$
and, conditions \eqref{t1b} and \eqref{t1e} imply that
$$u(t,x)=(-\Delta)^{s}u(t,x)=0,\quad  (t,x)\in Q_{\gamma,\tau,\epsilon}.$$
Thus, applying Proposition \ref{p3} with  $t_1=T-\epsilon$, $t_2=T$, $\mathcal U(t)=B(\gamma(t),\tau(t))$, $t\in[t_1,t_2]$, $R=u$, we deduce that $u=0$ on $(T-\epsilon,T)\times\R^n$. Thus, in light of \eqref{tata}, we have
\bel{t1h}d^1 \partial_t^{\alpha} u(t,x)=(\mathcal L u_2-P_{a^1,B^1,c^1}u  -e_1(-\Delta)^{s}u)(t,x)=0,\quad (t,x)\in(T-\epsilon,T)\times(\R^n\setminus \overline{B_{R_0}}).\ee
On the other hand, in view of \eqref{t1c}, there exists $\epsilon_1\in(0,\epsilon)$ and $U_2\subset U$ an open not empty subset of $\R^n$, such that
$$\inf_{(t,x)\in [T-\epsilon_1,T]\times \overline{U_2}} |d^1(t,x)|>0.$$
Combining this with \eqref{U}, \eqref{t1h} and the fact that $u=0$ on $(T-\epsilon,T)\times\R^n$, we obtain
$$\partial_t^{\alpha} u(t,x)=u(t,x)=0,\quad (t,x)\in(T-\epsilon_1,T)\times U_2.$$
Therefore, applying \eqref{RC} and Proposition \ref{p4}, we find
$$u(t,x)=0,\quad (t,x)\in (0,T)\times U_2.$$
In light of \eqref{U} and \eqref{tata}, it follows that
$$\mathcal L u_2(t,x)=d^1(t,x)\partial_t^\alpha u(t,x)=P_{a^1,B^1,c^1}u(t,x)=0,\quad (t,x)\in (0,T)\times U_2$$
and we deduce from \eqref{t1b} that
$$\begin{aligned}(-\Delta)^{s}u(t,x)=\frac{\mathcal L u_2(t,x)-P_{a^1,B^1,c^1}u(t,x)-d^1(t,x)\partial_t^\alpha u(t,x)}{e(t,x)}=0,\quad (t,x)\in (0,T)\times U_2.\end{aligned}$$
Thus, we have
$$u(t,x)=(-\Delta)^{s}u(t,x)=0,\quad  (t,x)\in (0,T)\times U_2$$
and, applying again Proposition \ref{p3}, we get
$$u(t,x)=0,\quad (t,x)\in(0,T)\times\R^n.$$
This proves that  $u_1=u_2$ on $[0,T]\times\R^n$ for any arbitrary $F\in C^\infty_0((0,\epsilon)\times\mathcal O_1)$.\\

\textbf{Step 2.} Consider $\mathcal L^*$ defined, by 
$$\begin{aligned}\mathcal L^*\phi=&(\partial_t^2(((c^2)^{-2}-(c^1)^{-2})\phi)-\sum_{i,j=1}^n\partial_{x_j}((a_{ij}^2-a_{ij}^1)\partial_{x_i}\phi)-(\partial_t ,\nabla_x)\cdot((B^2-B^1)\phi))\\
&-\partial_t^{\alpha*}[(d^2-d^1)\phi],\quad \phi\in C^\infty_0((0,T)\times\R^n).\end{aligned}$$
In this step, we will show that
\bel{t1n}\mathcal L^*\psi(t,x)=0,\quad \psi\in  C^\infty_0((0,T)\times\R^n;\mathbb C),\ (t,x)\in [0,T]\times\R^n.\ee
Combining the result of Step 1 with \eqref{eq5}, we obtain
\bel{t1i}\mathcal L u_2(t,x)=(P_{a^1,B^1,c^1}u +d_1 \partial_t^{\alpha} u +e(-\Delta)^{s}u)(t,x) =0,\quad (t,x)\in (0,T)\times\R^n.\ee
Let $\psi\in C^\infty_0((0,T)\times\R^n;\mathbb C)$ and $F\in C^\infty_0((0,\epsilon)\times\mathcal O_1)$.  Using \eqref{L}, \eqref{t1i}, Lemma \ref{l1} and integrating by parts, we get
\bel{t1j}\begin{aligned}0&=\left\langle \mathcal L u_2, \overline{\psi}\right\rangle_{D'((0,T)\times\R^n),C^\infty_0((0,T)\times\R^n)}\\
&=\int_0^T\int_{\R^n} (-\partial_t u_2\partial_t(((c^2)^{-2}-(c^1)^{-2})\overline{\psi})+\sum_{i,j=1}^n((a_{ij}^2-a_{ij}^1)\partial_{x_j}u_2)\partial_{x_i}\overline{\psi})dxdt\\
&\ \ \ +\int_0^T\int_{\R^n}(d^2-d^1)\partial_t^\alpha u_2\overline{\psi}-u_2(\partial_t ,\nabla_x)\cdot((B^2-B^1)\overline{\psi}) dxdt\\
&=\int_0^T\int_{\R^n} u_2(\partial_t^2(((c^2)^{-2}-(c^1)^{-2})\overline{\psi})-\sum_{i,j=1}^n\partial_{x_j}((a_{ij}^2-a_{ij}^1)\partial_{x_i}\overline{\psi}-(\partial_t ,\nabla_x)\cdot((B^2-B^1)\overline{\psi}))dxdt\\
&\ \ \ -\int_0^T\int_{\R^n}u_2\partial_t^{\alpha*}[(d^2-d^1)\overline{\psi}] dxdt\\
&=\int_0^T\int_{\R^n} u_2\overline{\mathcal L^*\psi} dxdt.\end{aligned}\ee
Now let us consider the following IVP  
\begin{equation}
\label{eq5*}
\left\{
\begin{aligned}
& P_{a^2,B^2,c^2}^*v  -\partial_t^{\alpha*} (d^2v) +(-\Delta)^{s}(ev) =\mathcal L^*\psi, &&\quad \textrm{in } (0,T)\times \R^n,\\                                                    
& v(T,x) = 0,\quad \partial_t v(T,x) = 0,                                             &&\quad x\in\R^n.
\end{aligned}
\right.
\end{equation}
In light of Proposition \ref{p2} the IVP \eqref{eq5*} admits a unique solution $v\in C([0,T]; H^1(\R^n))\cap C^1([0,T]; L^2(\R^n))$. Since $v$ solves \eqref{eq5*} and $u_2$ solves \eqref{eq1} with $(a,B,c,d)=(a^2,B^2,c^2,d^2)$, it is clear that $P_{a^2,B^2,c^2}^*v\in C([0,T];H^{-1}(\R^n))$ and $P_{a^2,B^2,c^2}u_2\in C([0,T];H^{-1}(\R^n))$. Moreover, integrating by parts, we get
$$\left\langle  P_{a^2,B^2,c^2}^*v, u_2\right\rangle_{L^2(0,T;H^{-1}(\R^n)),L^2(0,T;H^{1}(\R^n))}=\left\langle  P_{a^2,B^2,c^2}u_2, v\right\rangle_{L^2(0,T;H^{-1}(\R^n)),L^2(0,T;H^{1}(\R^n))}.$$
In the same way, we find
$$\int_0^T\int_{\R^n} u_2(\overline{(-\Delta)^{s}(ev)})dxdt=\int_0^T\int_{\R^n} (e (-\Delta)^{s}u_2 )\overline{v}dxdt
$$
and, applying Lemma \ref{l1}, we find
$$\int_0^T\int_{\R^n} u_2\overline{\partial_t^{\alpha*} (d^2v)}dxdt=-\int_0^T\int_{\R^n} (d^2\partial_t^\alpha u_2) \overline{v}dxdt.
$$
Combining these formula with \eqref{t1j}, we get
$$\begin{aligned}0&=\int_0^T\int_{\R^n} u_2 \overline{\mathcal L^*\psi} dxdt\\
&=\overline{\left\langle  P_{a^2,B^2,c^2}^*v  -\partial_t^{\alpha*} (d^2v) +(-\Delta)^{s}(ev), u_2\right\rangle}_{L^2(0,T;H^{-1}(\R^n)),L^2(0,T;H^{1}(\R^n))}\\
&=\left\langle P_{a^2,B^2,c^2}u_2 +d^2 \partial_t^{\alpha} u_2 +e(-\Delta)^{s}u_2, v\right\rangle_{L^2(0,T;H^{-1}(\R^n)),L^2(0,T;H^{1}(\R^n))}\\
&=\int_0^T\int_{\R^n}F\overline{v}dxdt.
\end{aligned}$$
Recalling that in this identity $F\in C^\infty_0((0,\epsilon)\times\mathcal O_1)$ is arbitrary chosen, we deduce that
\bel{t1k} v(t,x)=0,\quad (t,x)\in (0,\epsilon)\times\mathcal O_1.  \ee
Combining this with \eqref{U} and \eqref{t1d}-\eqref{t1dd}, we obtain
$$P_{a^2,B^2,c^2}^*v(t,x)=\partial_t^{\alpha*} (d^2v)(t,x)=\partial_t^{\alpha*} (d^1v)(t,x)=0,\quad (t,x)\in (0,\epsilon)\times V_1.$$
Moreover, conditions   \eqref{coe2},  \eqref{U} and \eqref{t1dd} imply that
\bel{tutu}\mathcal L^*\psi(t,x)=0,\quad (t,x)\in[0,T]\times (\R^n\setminus \overline{B_{R_0}})\supset [0,T]\times \mathcal O_1 \supset [0,T]\times V_1\ee
and it follows
$$\begin{aligned}(-\Delta)^{s}(ev)(t,x)=\mathcal L^*\psi(t,x)-P_{a^2,B^2,c^2}^*v(t,x)  -\partial_t^{\alpha*} (d^2v) (t,x)=0,\quad (t,x)\in(0,\epsilon)\times V_1.\end{aligned}$$
Thus, fixing $R=ev$,  $t_1=0$, $t_2=\epsilon$, $\mathcal U(t)=V_1$, $t\in[t_1,t_2]$,  we deduce that \eqref{p3a} holds true. Then, Proposition \ref{p3} implies that
$$e(t,x)v(t,x)=0,\quad (t,x)\in (0,\epsilon)\times\R^n.$$
Then, using \eqref{t1k} and \eqref{tutu}, we find
$$\partial_t^{\alpha*} (d^2v)(t,x)=(P_{a^2,B^2,c^2}^*v   +(-\Delta)^{s}(ev)-\mathcal L^*\psi)(t,x)=0,\quad (t,x)\in(0,\epsilon)\times\mathcal O_1$$
and, conditions \eqref{U} and \eqref{t1k} imply that
$$d^2v(t,x)=\partial_t^{\alpha*} (d^2v)(t,x)=0,\quad (t,x)\in(0,\epsilon)\times V_2.$$
Fixing $z=\widetilde{d^2v}\in C^1([0,T];L^2(\R^n))$ and applying \eqref{l1a}, we obtain
$$z(t,x)=\partial_t^{\alpha} z(t,x)=0,\quad (t,x)\in(T-\epsilon,T)\times V_2$$
and Proposition \ref{p4} implies that $z=0$ on $(0,T)\times V_2$. Therefore, we have $d^2v=0$ on $(0,T)\times V_2$ and conditions \eqref{t1c}, \eqref{t1dd} imply that $v=0$ on $(0,T)\times V_2$.
Then, in view of \eqref{tutu}, we get
$$\begin{aligned}&(-\Delta)^{s}(ev)(t,x)=\mathcal L^*\psi(t,x) -P_{a^2,B^2,c^2}^*v(t,x)  -\partial_t^{\alpha*} (d^2v)(t,x) =0,\quad (t,x)\in(0,T)\times V_2\end{aligned}$$
and, applying again Proposition \ref{p3}, we find
\bel{t1l}e(t,x)v(t,x)=0,\quad (t,x)\in (0,T)\times\R^n.\ee
Thus, in view of condition \eqref{t1b}, we obtain
\bel{t1m}v(t,x)=0,\quad (t,x)\in (0,T)\times B_{R_0}.\ee
Combining \eqref{t1l}-\eqref{t1m}, we get
$$P_{a^2,B^2,c^2}^*v(t,x)=  \partial_t^{\alpha*} (d^2v)(t,x)=0,\quad (t,x)\in (0,T)\times B_{R_0},$$
$$(-\Delta)^{s}(ev)(t,x)=0,\quad (t,x)\in (0,T)\times\R^n,$$
and we find
$$\mathcal L^*\psi(t,x)=[P_{a^2,B^2,c^2}^*v -\partial_t^{\alpha*} (d^2v) +(-\Delta)^{s}(ev)](t,x)=0,\quad  (t,x)\in (0,T)\times B_{R_0}.$$
Moreover, recalling from \eqref{coe2} and \eqref{t1dd} that supp$(\mathcal L^*\psi)\subset [0,T]\times \overline{B_{R_0}}$, we deduce that 
$\mathcal L^*\psi\equiv0$. Since $\psi\in C^\infty_0((0,T)\times\R^n;\mathbb C)$ is arbitrary chosen, we have proved that \eqref{t1n} holds true.\\

\textbf{Step 3.} From now on we will complete the proof of Theorem \ref{t1} by considering \eqref{t1n} with different choices of the map $\psi\in  C^\infty_0((0,T)\times\R^n;\mathbb C)$.  In this step, we will show that
\bel{t1o}c^1=c^2,\quad a^1=a^2.\ee
For this purpose, fix $\lambda>1$, $(t_0,x_0)\in (0,T)\times \R^n$, $\chi\in C^\infty_0((0,T)\times\R^n)$ such that $\chi=1$ on a neighborhood of $(t_0,x_0)$. Then, choose
$$\psi(t,x)=e^{i\lambda (t-t_0)}\chi(t,x),\quad (t,x)\in [0,T]\times\R^n$$
and deduce from \eqref{t1n} that we have
$$\begin{aligned}0&=\mathcal L^*\psi(t_0,x_0)\\
&=(\partial_t^2(((c^2)^{-2}-(c^1)^{-2})\psi)(t_0,x_0)-\sum_{i,j=1}^n\partial_{x_j}((a_{ij}^2-a_{ij}^1)\partial_{x_i}\psi)(t_0,x_0)\\
&\ \ \ -(\partial_t ,\nabla_x)\cdot((B^2-B^1)\psi))(t_0,x_0)-\partial_t^{\alpha*}[(d^2-d^1)\psi](t_0,x_0).\end{aligned}$$
On the other hand, for $j=1,\ldots,n$, we have $\partial_t\chi=\partial_{x_j}\chi=0$
on a neighborhood of $(t_0,x_0)$, which implies that
\bel{t1p}\begin{aligned}0
&=(\partial_t^2(((c^2)^{-2}-(c^1)^{-2})\psi)(t_0,x_0)-\sum_{i,j=1}^n\partial_{x_j}((a_{ij}^2-a_{ij}^1)\partial_{x_i}\psi(t_0,x_0)\\
&\ \ \ -(\partial_t ,\nabla_x)\cdot((B^2-B^1)\psi))(t_0,x_0)-\partial_t^{\alpha*}[(d^2-d^1)\psi](t_0,x_0)\\
&=-\lambda^2\rho(t_0,x_0)+2i\lambda \partial_t\rho(t_0,x_0)+\partial_t^2\rho(t_0,x_0)-(\partial_t ,\nabla_x)\cdot B(t,x_0)-i\lambda b_0(t,x_0)\\
&\ \ \ -\partial_t^{\alpha*}(d\psi)(t_0,x_0),\end{aligned}\ee
where $\rho=(c^2)^{-2}-(c^1)^{-2}$, $b_0=(b_0^2-b_0^1)$, $B=B^2-B^1$, $d=d^2-d^1$.
Notice that
$$|\partial_t^{\alpha*}(d\psi)(t_0,x_0)|\leq C\norm{d\psi}_{C^1([0,T];L^\infty(\R^n))}\leq C\lambda,$$
with $C>0$ a constant independent of $\lambda$. Thus, dividing both sides of the identity \eqref{t1p} by $\lambda^2$, we obtain
$$\begin{aligned}&|\rho(t_0,x_0)|\\
&\leq \lambda^{-2}\abs{2i\lambda \partial_t\rho(t_0,x_0)+\partial_t^2\rho(t,x_0)-(\partial_t ,\nabla_x)\cdot B(t,x_0)-\partial_tb_0(t,x_0)-i\lambda b_0(t,x_0)}\\
&\ \ \ +\lambda^{-2}\abs{\partial_t^{\alpha*}(d\psi)(t_0,x_0)}\\
&\leq C\lambda^{-1},
    \end{aligned}$$
with $C>0$ independent of $\lambda>1$. Thus, sending $\lambda\to+\infty$, we find 
$$(c^2(t_0,x_0))^{-2}-(c^1(t_0,x_0))^{-2}=\rho(t_0,x_0)=0.$$
Recalling that $(t_0,x_0)\in (0,T)\times \R^n$ is arbitrary chosen, we deduce that $c^1=c^2$.
Similarly, fix $\xi=(\xi_1,\ldots,\xi_n)\in\R^n$ and consider now that
$$\psi(t,x)=e^{i\lambda (x-x_0)\cdot\xi}\chi(t,x),\quad (t,x)\in [0,T]\times\R^n.$$
Then, recalling that $c^1=c^2$ and $\partial_t\chi=\partial_{x_j}\chi=0$, $j=1,\ldots,n$,
on a neighborhood of $(t_0,x_0)$, we get
\bel{t1q}\begin{aligned}0&=\mathcal L^*\psi(t_0,x_0)\\
&=-\sum_{i,j=1}^n\partial_{x_j}((a_{ij}^2-a_{ij}^1)\partial_{x_i}\psi)(t_0,x_0)-(\partial_t ,\nabla_x)\cdot(B\psi))(t_0,x_0) -\partial_t^{\alpha*}[d\psi](t_0,x_0)\\
&=\lambda^2\sum_{i,j=1}^na_{ij}(t_0,x_0)\xi_i\xi_j-i\lambda \sum_{i,j=1}^n\partial_{x_j}a_{ij}(t_0,x_0)\xi_i-(\partial_t ,\nabla_x)\cdot B(t_0,x_0)-i\lambda\sum_{k=1}^nb_k(t_0,x_0)\xi_k\\
&\ \ \ -\partial_t^{\alpha*}[d\chi](t_0,x_0),\end{aligned}\ee
with $a_{ij}=a_{ij}^2-a_{ij}^1$, $i,j=1,\ldots,n$, $(b_k)_{0\leq k\leq n}=B=(b_k^2-b_k^1)_{0\leq k\leq n}$. Again, dividing by $\lambda^2$ and sending $\lambda\to+\infty$, we get
$$\sum_{i,j=1}^na_{ij}(t_0,x_0)\xi_i\xi_j=0.$$
Now recalling that $\xi=(\xi_1,\ldots,\xi_n)\in\R^n$ is arbitrary chosen and recalling, from \eqref{coe3}, that
$$a_{ji}(t_0,x_0)=a_{ji}^2(t_0,x_0)-a_{ji}^1(t_0,x_0)=a_{ij}^2(t_0,x_0)-a_{ij}^1(t_0,x_0)=a_{ij}(t_0,x_0),\quad i,j=1,\ldots,n,$$
by comparison principle, we get
$$a^2(t_0,x_0)-a^1(t_0,x_0)=a(t_0,x_0)=0.$$
Then it follows that $a^1=a^2$ and we obtain \eqref{t1o}.\\

\textbf{Step 4.} In this step we complete the proof of the theorem by showing that $B^1=B^2$ and $d^1=d^2$. Let us first prove that $B^1=B^2$. Combining \eqref{t1o} with \eqref{t1q}, for every $\lambda>1$, $\xi=(\xi_1,\ldots,\xi_n)\in\R^n$ and $(t_0,x_0)\in (0,T)\times \R^n$, we find
$$-(\partial_t ,\nabla_x)\cdot B(t,x_0)-i\lambda\sum_{k=1}^nb_k(t_0,x_0)\xi_k-\partial_t^{\alpha*}[d\chi](t_0,x_0)=0.$$
Dividing by $\lambda$ and sending $\lambda\to+\infty$, we deduce that
$$\sum_{k=1}^n(b_k^2(t_0,x_0)-b_k^1(t_0,x_0))\xi_k=0,\quad (t_0,x_0)\in (0,T)\times \R^n,\ \xi=(\xi_1,\ldots,\xi_n)\in\R^n.$$
It follows that 
\bel{t1r}b^2_k=b^1_k,\quad k=1,\ldots,n.\ee
In the same way, \eqref{t1o} and \eqref{t1p} imply
$$-(\partial_t ,\nabla_x)\cdot B(t,x_0)-i\lambda b_0(t,x_0)-\partial_t^{\alpha*}(de^{i\lambda (t-t_0)}\chi)(t_0,x_0)=0,\quad (t_0,x_0)\in (0,T)\times \R^n.$$
Dividing by $\lambda$ and sending $\lambda\to +\infty$ we get
\bel{t1s}|b_0(t_0,x_0)|=\limsup_{\lambda\to+\infty}\lambda^{-1}|\partial_t^{\alpha*}(de^{i\lambda (t-t_0)}\chi)(t_0,x_0)|.\ee
On the other hand, applying \eqref{coe1}, \eqref{l1a}, fixing $\phi(t)=d(t,x_0)e^{i\lambda (t-t_0)}\chi(t,x_0)$, $t\in[0,T]$,
and choosing $\lambda>(T-t_0)^{-1}+1$, we find
$$\begin{aligned}\abs{\partial_t^{\alpha*}(de^{i\lambda (t-t_0)}\chi)(t_0,x_0)}&=\abs{\partial_t^{\alpha*}\phi(t_0)}\\
&=\abs{\partial_t^{\alpha}\tilde{\phi}(T-t_0)}\\
&=\abs{\int_0^{T-t_0}\frac{(T-t_0-s)^{-\alpha}}{\Gamma(1-\alpha)}\tilde{\phi}'(s)ds}\\
&=\abs{\int_0^{T-t_0}\frac{s^{-\alpha}}{\Gamma(1-\alpha)}\tilde{\phi}'(T-t_0-s)ds}\\
&\leq \abs{\int_0^{\lambda^{-1}}\frac{s^{-\alpha}}{\Gamma(1-\alpha)}\phi'(t_0+s)ds}+\abs{\int_{\lambda^{-1}}^{T-t_0}\frac{s^{-\alpha}}{\Gamma(1-\alpha)}\phi'(t_0+s)ds}\\
&\leq 3\norm{d}_{W^{1,\infty}((0,T)\times B_{R_1})}\norm{\chi}_{W^{1,\infty}((0,T)\times\R^n)} \lambda \int_0^{\lambda^{-1}}s^{-\alpha}ds\\
&\ \ \ +\abs{\frac{(T-t_0)^\alpha \phi(T)}{\Gamma(1-\alpha)}-\frac{\lambda^\alpha \phi(t_0+\lambda^{-1})}{\Gamma(1-\alpha)}+\alpha \int_{\lambda^{-1}}^{T-t_0}\frac{s^{-1-\alpha}}{\Gamma(1-\alpha)}\phi(t_0+s)ds}\\
&\leq C\lambda^{\alpha} +\norm{d}_{L^\infty((0,T)\times B_{R_1})}\norm{\chi}_{L^\infty((0,T)\times\R^n)}\left(\frac{(T-t_0)^\alpha }{\Gamma(1-\alpha)}+\frac{\lambda^\alpha }{\Gamma(1-\alpha)}\right)\\
&\ \ \ +\norm{d}_{L^\infty((0,T)\times B_{R_1})}\norm{\chi}_{L^\infty((0,T)\times\R^n)}\left(\int_{\lambda^{-1}}^{T-t_0}\frac{s^{-1-\alpha}}{\Gamma(1-\alpha)}ds\right)\\
&\leq C\lambda^\alpha,\end{aligned}$$
where $C>0$ is a constant independent of $\lambda>1$ that might change from line to line. Combining this with \eqref{t1s}, we deduce that
$$|b_0(t_0,x_0)|=\limsup_{\lambda\to+\infty}\lambda^{-1}|\partial_t^{\alpha*}(de^{i\lambda (t-t_0)}\chi)(t_0,x_0)|\leq C\limsup_{\lambda\to+\infty}\lambda^{\alpha-1}=0.$$
This proves that $b_0^1=b_0^2$ and, with \eqref{t1r}, it implies that $B^1=B^2$. In order to complete the proof of the theorem, we only need to show that $d^1=d^2$. In light of \eqref{t1n}, \eqref{t1o},  and the fact that $B^1=B^2$, we have
\bel{t1t}\partial_t^{\alpha*}(d\psi)(t,x)=0,\quad \psi\in C^\infty_0((0,T)\times\R^n),\ (t,x)\in [0,T]\times\R^n.\ee
Let us fix $\psi\in C^\infty_0((0,T)\times\R^n)$ and consider $\psi_1\in C^\infty_0((0,T)\times\R^n)$ such that $\psi_1=1$ on a neighborhood of supp$(\psi)$. Multiplying \eqref{t1t} by $I_t^\alpha\psi_1(t,x)$ and integrating on $(0,T)\times\R^n$, we get
$$\int_0^T\int_{\R^n}\partial_t^{\alpha*}(d\psi)(t,x)I_t^\alpha\psi_1(t,x)dxdt=0.$$ Notice also that $I_t^\alpha\psi_1\in C^\infty([0,T];C^\infty_0(\R^n))$ and $$I_t^\alpha\psi_1(0,x)=\psi_1(0,x)=d\psi(T,x)=0,\quad x\in\R^n.$$ Thus, applying \eqref{l1c}-\eqref{l1d}, we find
$$\begin{aligned}0&=\int_0^T\int_{\R^n}\partial_t^{\alpha*}(d\psi)(t,x)I_t^\alpha\psi_1(t,x)dxdt\\
&=-\int_0^T\int_{\R^n}d\psi(t,x)\partial_t^{\alpha}I_t^\alpha\psi_1(t,x)dxdt=-\int_0^T\int_{\R^n}d\psi(t,x)\psi_1(t,x)dxdt\\
&=-\int_0^T\int_{\R^n}(d^2-d^1)\psi dxdt.\end{aligned}$$
Recalling that here $\psi\in C^\infty_0((0,T)\times\R^n)$ is arbitrary chosen this implies that $d^1=d^2$ and it completes the proof of the theorem.
\section*{Acknowledgment}
Y. Kian thanks Ali Feizmohammadi for fruitful discussions as well as its suggestions and comments on this problem. 


\end{document}